\newtheorem{theorem}{Theorem}[section]
\newtheorem{lemma}[theorem]{Lemma}
\newtheorem{proposition}[theorem]{Proposition}
\newtheorem{definition}[theorem]{Definition}
\newtheorem{corollary}[theorem]{Corollary}
\theoremstyle{remark}
\newtheorem{remark}{\bf  \itshape  Remark}
\newcommand{\bgcd}{{\rm gcd}_{{\bf b}}}
\newcommand{\vb}{{{\bf b}}}
\title{Random walks on generalized visible lattice points}
\author{Kui Liu}
\address{School of Mathematics and Statistics, Qingdao University, 308 Ningxia Road, Shinan District, Qingdao, Shandong, China}
\email{liukui@qdu.edu.cn}
\author{Xianchang Meng}
\address{Mathematisches Institut,
	Georg-August Universit\"{a}t G\"{o}ttingen,
	Bunsenstra{\ss}e 3-5,
	D-37073 G\"{o}ttingen,
	Germany}
\email{xianchang.meng@uni-goettingen.de}
\keywords{Random walk, lattice point, multiplicative function, Dirichlet series}
\subjclass[2010]{60G50, 11H06, 11N37 }
\begin{document}

\maketitle

\begin{abstract}
	We consider the proportion of generalized visible lattice points in the plane visited by random walkers. Our work concerns the visible lattice points in random walks in three aspects: (1) generalized visibility along curves; (2) one random walker visible from multiple watchpoints; (3) simultaneous visibility of multiple random walkers. Moreover, we found new phenomenon in the case of multiple random walkers: for visibility along a large class of curves and for any number of random walkers, the proportion of steps at which all random walkers are visible simultaneously is almost surely larger than a positive constant. 
\end{abstract}

\section{Introduction}

\subsection{Background}

In the two-dimensional integer lattice $\mathbb{Z}^2$, an lattice point $P\in\mathbb{Z}^2$ is said to be visible (from the origin) if there is no other lattice point on the straight line segment joining the origin and $P$. A classical result proved by Sylvester \cite{S} in 1883 indicates that the density of visible lattice points in $\mathbb{Z}^2$ is $1/\zeta(2)=6/\pi^2\approx 0.60793$, where $\zeta(s)$ is the Riemann zeta function.
 
Fix ${\bf b}=(b_1,b_2)\in\mathbb{N}^2$ with $\gcd(b_1,b_2)=1$. We define the following generalized visibility of lattice points along curves.

\begin{definition} \label{def: b-visibility}
Given two distinct lattice points $P=(p_1,p_2)$ and $Q=(q_1,q_2)$ in $\mathbb{Z}^2$, they determine a curve  joining $P$ and $Q$ of type $a_1(y-q_2)^{b_1}=a_2(x -q_1)^{b_2}$ for some $(a_1,a_2)\in\mathbb{Q}^2\setminus \{(0,0)\}$. If there is no other lattice point lying on the segment of this curve between $P$ and $Q$, then we say $P$ is ${\bf b}$-visible from $Q$.
\end{definition}

Note that the ${\bf b}$-visibility is mutual. Indeed, if $P$ is ${\bf b}$-visible from $Q$ along the curve $a_1(y-q_2)^{b_1}=a_2(x-q_1)^{b_2}$, then $Q$ is also ${\bf b}$-visible from $P$ along the curve $a_1(y-p_2)^{b_1}=(-1)^{b_1+b_2}a_2(x-p_1)^{b_2}$. 

If an lattice point $P\neq (0,0)$ is ${\bf b}$-visible from the origin, we say it is ${\bf b}$-\textit{visible} for short. There are some previous works on the density of ${\bf b}$-visible points from the origin, see e.g. \cite{BEH} and \cite{GHKM}. Recently, Liu and Meng \cite{LM} proved precise asymptotic formulas for the number of lattice points $(1,b_2)$-visible from multiple watchpoints simultaneously in square areas.

In 2015, Cilleruelo, Fern\'{a}ndez and Fern\'{a}ndez \cite{CFF} studied finer structure of the set of visible (i.e. ${\vb}=(1,1)$) lattice points from the view point of random walks. For $0<\alpha<1$, an $\alpha$-random walk starting at the origin on $\mathbb{Z}^2$ is defined by
\begin{equation}\label{eq: random-walk-pattern}
P_{i+1}=P_i+\begin{cases}
(1,0), &\ {\rm with\ probability\ }\alpha,\\
(0,1), &\ {\rm with\ probability\ }1-\alpha,
\end{cases}
\end{equation}
where $P_i=(x_i, y_i)$ is the coordinate of the $i$-th step of the $\alpha$-random walk for $i=0,1,2,\cdots,$ and $P_0=(0,0)$. They showed that the proportion of such type of visible points visited by an $\alpha$-random walk is almost surely $1/\zeta(2)$.

In this paper, combining probabilistic arguments and tools from analytic number theory, we generalize the result in \cite{CFF} in three aspects: (1) ${\vb}$-visible lattice points; (2) one random walker visible from multiple watchpoints; (3) simultaneous $\vb$-visibility of multiple random walkers.

\subsection{Our results} Throughout this paper, we always assume ${\bf b}=(b_1,b_2)\in\mathbb{N}^2$ is fixed with $\gcd(b_1,b_2)=1$.

We first consider the $\vb$-visibility of one random walker from multiple watchpoints. Suppose the watchpoints set 
$$
\mathcal{W}:=\big\{(u_j,v_j)\in\mathbb{Z}^2,~1\leq j\leq J\big\}
$$
satisfies condition
\begin{center}
$(*)$\quad distinct lattice points in  $\mathcal{W}$ are pairwise ${\vb}$-visible from each other.
\end{center}
We remark that the cardinality $J=|\mathcal{W}|$ can't be very large according to Lemma \ref{lem: cardinality J<=} in the next section. If an lattice point is $\bf b$-visible from all points in $\mathcal{W}$, we say it is ${\bf b}$-\textit{visible from} $\mathcal{W}$. 

Given an $\alpha$-random walk defined in \eqref{eq: random-walk-pattern}, consider a sequence of random variables associated with $\mathcal{W}$
\begin{equation}\label{eq: expression X(W)_i}
X_i:=X(\vb, \mathcal{W})_i=\begin{cases}
1,&\text{if $P_i$ is~{\bf b}-visible from}~\mathcal{W},\\
0,&\text{otherwise},
\end{cases}
\end{equation}
for $i=1,2,\cdots$. Then the random variable
\begin{equation}\label{eq: }
\overline{S}(n):=\overline{S}(n)_{\alpha, \vb, \mathcal{W}}=\frac{1}{n}\sum_{1\leq i\leq n} X_i
\end{equation}
indicates the proportion of steps at which the $\alpha$-random walker is $\bf{b}$-visible from $\mathcal{W}$ in the first $n$ steps.

\begin{theorem}\label{thm: Theorem for multiple watchpoints}
For watchpoints set $\mathcal{W}$ satisfying condition $(*)$ with cardinality $J=|\mathcal{W}|<2^{b_1+b_2}$, we have
$$
\lim\limits_{n\rightarrow\infty}\overline{S}(n)=\prod\limits_{p}\bigg(1-\frac{J}{p^{b_1+b_2}}\bigg)
$$
almost surely, where $p$ runs over all primes.
\end{theorem}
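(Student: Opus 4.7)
The strategy is to follow the template of the Cilleruelo--Fern\'andez--Fern\'andez argument for the classical case $\vb=(1,1)$ with a single watchpoint, adapted here to general $\vb$ and multiple watchpoints. The main ingredients will be: (i) a Möbius identity detecting $\vb$-visibility, (ii) Fourier estimates for the distribution of a Binomial random variable modulo $m$, and (iii) a Chebyshev plus Borel--Cantelli argument along a sparse subsequence to upgrade $L^2$ convergence to almost-sure convergence.

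First, for any $Q=(u,v)\in\ZZ^2$ and $P=(p_1,p_2)$ I would write
$$
\mathbf{1}\bigl[P\text{ is $\vb$-visible from }Q\bigr]
=\sum_{d\geq 1}\mu(d)\,\mathbf{1}\bigl[d^{b_2}\mid p_1-u,\ d^{b_1}\mid p_2-v\bigr],
$$
reflecting that $P$ is $\vb$-visible from $Q$ iff no prime $p$ has both $p^{b_2}\mid p_1-u$ and $p^{b_1}\mid p_2-v$. Expanding $X_i$ as a product of $J$ such Möbius sums gives a sum over $J$-tuples $(d_1,\ldots,d_J)$, and condition $(*)$ then plays the decisive role: if some prime $p$ divided both $d_j$ and $d_k$ for $j\neq k$, the indicators would force $p^{b_2}\mid u_j-u_k$ and $p^{b_1}\mid v_j-v_k$, contradicting the pairwise $\vb$-visibility of the watchpoints. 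Hence only tuples of pairwise coprime $d_j$'s contribute, and grouping them into $D=d_1\cdots d_J$ (each prime of $D$ assigned to one of the $J$ positions) recovers the target Euler product from the identity $\sum_{D\text{ squarefree}}J^{\omega(D)}\mu(D)/D^{b_1+b_2}=\prod_p(1-J/p^{b_1+b_2})$.

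Second, I would compute $E[X_i]$ using that $x_i\sim\mathrm{Bin}(i,\alpha)$ and $y_i=i-x_i$. For a fixed pairwise-coprime tuple the joint congruence condition collapses, via the Chinese Remainder Theorem, to a single constraint $x_i\equiv a\pmod{M}$, with $M$ and $a$ depending on $i$ and the $d_j$'s. A Fourier expansion on $\ZZ/M\ZZ$ together with the bound $|\alpha+(1-\alpha)e^{2\pi it/M}|<1$ for $t\not\equiv 0\pmod M$ yields $\mathbb{P}(x_i\equiv a\pmod M)=1/M+O(M\rho^{i})$ for a suitable $\rho<1$. Truncating the Möbius sum at a level $D=D(i)$, bounding the tail by $\sum_{d>D}1/d^{b_1+b_2}$, and using $J<2^{b_1+b_2}$ to guarantee absolute convergence of the limiting Euler product, one obtains $E[X_i]\to\prod_p(1-J/p^{b_1+b_2})$, hence $E[\overline{S}(n)]\to\prod_p(1-J/p^{b_1+b_2})$.

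Finally, for the almost-sure statement I would control $\mathrm{Var}(\overline{S}(n))$ by estimating the joint moments $E[X_iX_k]$ for $i<k$. Decomposing $P_k=P_i+(P_k-P_i)$ with the second summand an independent $\alpha$-random walk of length $k-i$, and applying the Möbius expansion at both steps, the joint expectation factors asymptotically as $E[X_i]E[X_k]$ once $k-i$ is large, with a tractable error. The hard part will be quantifying the decay of $\mathrm{Cov}(X_i,X_k)$ uniformly in $(i,k)$: one must balance truncation of the double Möbius sum against the Fourier error in the Binomial-mod-$M$ estimate, and the strict bound $J<2^{b_1+b_2}$ again enters to keep the associated Euler products absolutely convergent. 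Once $\mathrm{Var}(\overline{S}(n))$ is shown to be sufficiently small, a Chebyshev--Borel--Cantelli step along a sparse subsequence $n_k=\lfloor k^c\rfloor$, combined with the deterministic interpolation $|\overline{S}(n)-\overline{S}(n_k)|\leq (n-n_k)/n$ coming from $0\leq X_i\leq 1$, will upgrade this to almost-sure convergence for the full sequence.
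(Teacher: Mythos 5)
Your overall architecture --- M\"obius detection of $\vb$-visibility with condition $(*)$ forcing pairwise coprime $d_j$'s, reduction via the Chinese Remainder Theorem to a congruence for the binomial variable $x_i$, and a Chebyshev/Borel--Cantelli upgrade to almost-sure convergence --- is essentially the paper's (the last step is packaged there as Lemma \ref{lem: second moment method}). But there is a genuine gap at the center of your second paragraph: the claim that $\mathbb{E}[X_i]$ itself converges to $\prod_p(1-J/p^{b_1+b_2})$ is false, and it conceals the main number-theoretic ingredient. Because the step number $i=x_i+y_i$ is deterministic, the two congruences attached to a tuple $(d_1,\dots,d_J)$ (in the normalization $b_1\le b_2$: $d_j^{b_1}\mid x_i-u_j$ and $d_j^{b_2}\mid i-x_i-v_j$) are simultaneously solvable only when $d_j^{b_1}\mid i-u_j-v_j$, a condition on $i$ alone; only then does the random part collapse to a single congruence modulo $(d_1\cdots d_J)^{b_2}$, of probability about $(d_1\cdots d_J)^{-b_2}$. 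Hence $\mathbb{E}[X_i]=f_{\vb,\mathbf{s}}(i)+O_{\alpha,\mathcal{W},\varepsilon}(i^{-1/2+\varepsilon})$ with $s_j=u_j+v_j$ and $f_{\vb,\mathbf{s}}$ as in \eqref{eq: expression of f_bu(n)}, a genuinely fluctuating arithmetic function of $i$: already for $J=1$, $\vb=(1,1)$ and watchpoint the origin it equals $\varphi(i)/i$, which has no limit. (Your per-$i$ tail bound $\sum_{d>D}d^{-b_1-b_2}$ has the same defect: for fixed $i$ the weight is $d^{-b_2}$ restricted to $d^{b_1}\mid i-s_j$; the exponent $b_1+b_2$ only appears after averaging over $i$.) What converges is the Ces\`aro mean, and proving $\sum_{n\le x}f_{\vb,\mathbf{s}}(n)=x\prod_p\bigl(1-J/p^{b_1+b_2}\bigr)+O(\log^J x)$ is a separate, necessary step (Lemma \ref{lem: mean of f_bu}). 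The same function must then be carried through the variance: one shows $\mathbb{E}[X_iX_{i'}]=f_{\vb,\mathbf{s}}(i)f_{\vb,\mathbf{s}}(i')+O\bigl(i^{\varepsilon}(i^{-1/2}+(i'-i)^{-1/2})\bigr)$ and compares $\tfrac{2}{n^2}\sum_{i<i'}f_{\vb,\mathbf{s}}(i)f_{\vb,\mathbf{s}}(i')$ with $\bigl(\tfrac1n\sum_{i\le n}f_{\vb,\mathbf{s}}(i)\bigr)^2$, rather than arguing that each one-step mean is close to a constant.

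Three smaller repairs. (1) Your M\"obius detector attaches the exponents to the wrong coordinates: by Definition \ref{def: b-visibility} and Lemma \ref{lem: criteria for b-visiblity}, $P$ is $\vb$-visible from $Q$ iff no $d>1$ has $d^{b_1}\mid p_1-q_1$ and $d^{b_2}\mid p_2-q_2$, i.e. $\bgcd(p_1-q_1,p_2-q_2)=1$; what you wrote is the $(b_2,b_1)$-criterion. By the axis-swap symmetry (Remark \ref{remark-b1-b2}) this does not change the final constant, but the intermediate formulas should be stated consistently, and the degenerate cases $p_1=q_1$ or $p_2=q_2$ (Remark \ref{remark-equal-coordinate}) must be disposed of before invoking the $\bgcd$ criterion. (2) The Fourier bound $\mathbb{P}(x_i\equiv a\bmod M)=1/M+O(M\rho^{i})$ with a fixed $\rho<1$ is not uniform in $M$: frequencies near $0$ contribute roughly $e^{-ci/M^2}$, so the usable uniform estimate is $1/M+O_{\alpha}(i^{-1/2})$ (Lemma \ref{lem: sum with congruence condition}), and the truncation of the M\"obius sum has to be balanced against that error, not a geometric one. (3) The hypothesis $J<2^{b_1+b_2}$ is not what makes the Euler product converge (it converges absolutely for every $J$ since $b_1+b_2\ge 2$); its only role is to make the limit positive, while the pairwise coprimality of the $d_j$'s --- which you do derive correctly --- comes from condition $(*)$ alone.
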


\begin{remark}
	The right hand side is independent of $\alpha$. By Lemma \ref{lem: cardinality J<=}, if $|\mathcal{W}|=2^{b_1+b_2}$, there is no lattice point {\vb}-visible from $\mathcal{W}$, in which case $\lim_{n\rightarrow\infty}\overline{S}(n)=0$.
\end{remark}

We also consider multiple random walkers simultaneously {\bf b}-visible from the origin. Let $r\geq 1$ be an integer and
$$
\boldsymbol{\alpha}:=(\alpha_1,\cdots, \alpha_r)
$$ 
with $0<\alpha_1,\cdots,\alpha_r<1$. Similarly as in \eqref{eq: random-walk-pattern}, for $1\leq j\leq r$, we define the $\alpha_j$-random walk starting from the origin by
$$
\label{eq: multi-random-walk}
P^{(j)}_{i+1}=P^{(j)}_i+\begin{cases}
(1,0), &\ {\rm with\ probability\ }\alpha_j,\\
(0,1), &\ {\rm with\ probability\ }1-\alpha_j,
\end{cases}
$$
where $P^{(j)}_i$ is the $i$-th ($i=0,1,2,\cdots$) step of the $\alpha_j$-random walk and $P^{(j)}_0=(0,0)$. Define a sequence of random variables associated with $r$ walkers by
\begin{equation}\label{eq: expression Y(alpha)}
Y_i:=Y(\vb,r)_i=\begin{cases}
1,&\text{ if all   $P^{(j)}_i$ for $1\leq j\leq r$ are  $\mathbf{b}$-visible} ,\\
0,&\ {\rm otherwise}.
\end{cases}
\end{equation}
Then the random variable
\begin{equation*}
    \overline{R}(n):=\overline{R}(n)_{\boldsymbol{\alpha},{\bf b},r}=\frac{1}{n}\sum_{1\leq i\leq n} Y_i
\end{equation*}
indicates the proportion of steps at which all these $r$ walkers are $\bf{b}$-visible simultaneously in the first $n$ steps. 
\begin{theorem}\label{thm: Theorem for multiple walkers}
Assume $\vb=(b_1, b_2)$ with $\gcd(b_1, b_2)=1$. 
We have
$$
\lim\limits_{n\rightarrow\infty}\overline{R}(n)=\prod_{p}\bigg( 1-\frac{1}{p^{b_{\star}}}+\frac{1}{p^{b_{\star}}}\bigg(1-\frac{1}{p^{b^{\star}}}\bigg)^r \bigg)
$$
almost surely, where  $b_{\star}=\min(b_1, b_2)$, $b^{\star}=\max(b_1, b_2)$, and $p$ runs over all primes, 
\end{theorem}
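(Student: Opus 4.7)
The plan is to follow an expectation-then-second-moment strategy, treating the general $\vb$-visibility and the $r$ independent walkers in one sweep. The starting point is the prime-power characterization (which follows from Definition~\ref{def: b-visibility} together with $\gcd(b_1,b_2)=1$): a lattice point $(x,y)\in\mathbb{Z}^2\setminus\{(0,0)\}$ is $\vb$-visible from the origin if and only if there is no prime $p$ with $p^{b_2}\mid x$ and $p^{b_1}\mid y$. M\"obius inversion over squarefree $D$ then writes the visibility indicator as $\sum_{D}\mu(D)\mathbf{1}_{\{D^{b_2}\mid x,\;D^{b_1}\mid y\}}$, and since the $r$ walks are independent,
\[
\mathbb{E}[Y_i]\;=\;\prod_{j=1}^{r}\,\sum_{D_j\geq 1}\mu(D_j)\,\mathbb{P}\!\bigl(D_j^{b_2}\mid x_i^{(j)},\ D_j^{b_1}\mid y_i^{(j)}\bigr),
\]
where $P_i^{(j)}=(x_i^{(j)},y_i^{(j)})$ and $x_i^{(j)}+y_i^{(j)}=i$.

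Because of the constraint $x_i^{(j)}+y_i^{(j)}=i$, the joint condition $D^{b_2}\mid x_i^{(j)}$ and $D^{b_1}\mid i-x_i^{(j)}$ is equivalent, via CRT, to $x_i^{(j)}$ lying in a single residue class modulo $\operatorname{lcm}(D^{b_1},D^{b_2})=D^{b^\star}$, and is consistent only when $D^{b_\star}\mid i$. A quantitative local-limit estimate for $\operatorname{Bin}(i,\alpha_j)$ in arithmetic progressions (obtained from characters, or from the generating function $(\alpha_j z+1-\alpha_j)^i$) gives $\mathbb{P}(x_i^{(j)}\equiv c\pmod{D^{b^\star}})=D^{-b^\star}+\mathrm{error}$, uniformly for $D$ in a suitable range. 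Substituting, expanding the product over $j$, and swapping the orders of summation over $i\leq n$ and over $(D_1,\dots,D_r)$---using that $\mathbf{1}_{\{L^{b_\star}\mid i\}}$ with $L=\operatorname{lcm}(D_1,\dots,D_r)$ has mean value $L^{-b_\star}$---leads to
\[
\lim_{n\to\infty}\mathbb{E}\bigl[\overline{R}(n)\bigr]\;=\;\sum_{D_1,\dots,D_r\geq 1}\frac{\mu(D_1)\cdots\mu(D_r)}{\operatorname{lcm}(D_1,\dots,D_r)^{b_\star}\,(D_1\cdots D_r)^{b^\star}}.
\]
The summand is jointly multiplicative in the $D_j$, so the series factors into an Euler product. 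At each prime $p$, writing $D_j=p^{\epsilon_j}$ with $\epsilon_j\in\{0,1\}$, the local factor equals $1+\sum_{\varnothing\neq S\subseteq\{1,\dots,r\}}(-1)^{|S|}p^{-b_\star-|S|\,b^\star}$, which by the binomial theorem collapses to $1-p^{-b_\star}+p^{-b_\star}(1-p^{-b^\star})^{r}$, exactly the constant in the statement; a sanity check is that at $r=1$ this reduces to $1-p^{-b_1-b_2}$, recovering the expected density of $\vb$-visible points.

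To upgrade mean convergence to almost-sure convergence, I would run a Chebyshev / second-moment argument. Computing $\operatorname{Cov}(Y_i,Y_{i'})$ for $i<i'$ uses the Markov property of each independent walker: conditionally on $P_i^{(j)}$, the increment $P_{i'}^{(j)}-P_i^{(j)}$ is independent of $P_i^{(j)}$, so two-point probabilities reduce to joint distributions of $x_i^{(j)}$ and $x_{i'}^{(j)}$ in two arithmetic progressions. Inserting two M\"obius expansions and estimating via a two-variable local-limit bound should produce $\operatorname{Var}(\overline{R}(n))=o(1)$, with enough power-saving along a sparse subsequence $n_k$ (say $n_k=k^2$) for Borel--Cantelli to yield $\overline{R}(n_k)\to C$ almost surely. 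Since $Y_i\in\{0,1\}$ gives $|\overline{R}(n)-\overline{R}(n_k)|\leq 2(n_{k+1}-n_k)/n_k$ for $n_k\leq n\leq n_{k+1}$, a monotone interpolation then extends the convergence to all $n$. The main obstacle is precisely this covariance estimate: the first-moment Euler-product computation is a clean algebraic rearrangement once the single local-limit estimate is in hand, but the covariance requires joint equidistribution of $(x_i^{(j)},x_{i'}^{(j)})$ modulo $(D^{b^\star},{D'}^{b^\star})$ with an error term small enough to survive summation over $D,D'$ up to size $\sim\sqrt{i}$, uniformly in the drifts $\alpha_j$; preserving multiplicativity in this double sum while keeping the error summable is the delicate technical step.
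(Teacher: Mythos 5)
Your proposal is correct in outline and rests on the same two pillars as the paper's proof --- equidistribution of the binomial coordinate in arithmetic progressions (the paper's Lemma \ref{lem: sum with congruence condition}) plus a second-moment upgrade to almost-sure convergence --- but the route to the constant is genuinely different. The paper first reduces to $b_1\leq b_2$, uses the shift $\bgcd(k,i-k)=\bgcd(i,i-k)$ from Lemma \ref{lem: properties of bgcd}(ii) inside Lemma \ref{lem: sum with gcdb condition} to write each one-walker factor as $f_{\vb}(i)+O(i^{-1/2+\varepsilon})$ with $f_{\vb}(i)=\sum_{d^{b_1}\mid i}\mu(d)d^{-b_2}$, and then evaluates $\sum_{i\leq x}f_{\vb}(i)^r$ analytically, via the factorization $\zeta(s)G(s)$ of its Dirichlet series and Perron's formula (Lemma \ref{lem: mean of f_b^r(n)}). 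You instead keep the $r$-fold M\"obius expansion, note by CRT for non-coprime moduli that the two divisibility conditions are consistent only when $D^{b_\star}\mid i$ and then pin the coordinate to a single class mod $D^{b^\star}$ (this is the coordinate-symmetric counterpart of the paper's shift trick, so you never need the explicit $b_1\leq b_2$ reduction), and you evaluate the limiting $r$-fold sum $\sum\mu(D_1)\cdots\mu(D_r)\operatorname{lcm}(D_1,\dots,D_r)^{-b_\star}(D_1\cdots D_r)^{-b^\star}$ by multiplicativity; your local-factor computation does collapse to the stated Euler product, and at $r=1$ it recovers $1-p^{-b_1-b_2}$ as it should. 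Your elementary lcm/Euler-product evaluation replaces the paper's Perron argument at the cost of a weaker error term, which is harmless since only the limit of the expectation is needed; the covariance step you flag as delicate is exactly what the paper carries out (independence across walkers, conditioning on the increment within a walker, Lemma \ref{lem: sum with gcdb condition} applied twice), producing errors $i^{-1/2+\varepsilon}+(i'-i)^{-1/2+\varepsilon}$ and hence $\mathbb{V}(\overline{R}(n))=O(n^{-1/2+\varepsilon})$, so your plan does go through.

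Two small corrections. First, your visibility criterion pairs the exponents with the wrong coordinates relative to Definition \ref{def: b-visibility} and Lemma \ref{lem: criteria for b-visiblity}: for curves $a_1y^{b_1}=a_2x^{b_2}$ the correct condition is that no prime $p$ satisfies $p^{b_1}\mid x$ and $p^{b_2}\mid y$ (for $\vb=(1,2)$ the point $(2,4)$ is invisible because $(1,1)$ lies on $y=x^2$, which your version misses). This does not affect your limit, since the consistency condition $D^{b_\star}\mid i$ and the modulus $D^{b^\star}$ are symmetric in the two pairings, but the criterion should be stated consistently with the paper's. Second, with the variance bound this method actually delivers, $O(n^{-1/2+\varepsilon})$, the subsequence $n_k=k^2$ is too dense: $\sum_k n_k^{-1/2+\varepsilon}$ diverges, so Borel--Cantelli fails there. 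Take $n_k=k^m$ with $m\geq 3$ (the interpolation bound you give still tends to $0$), or simply invoke Lemma \ref{lem: second moment method}, which packages precisely this Chebyshev--Borel--Cantelli--interpolation argument for any $\delta>0$.
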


\begin{remark}\label{remark-b1-b2}
In our proof, we only need to prove the case $b_1\leq b_2$. For the case $b_1\geq b_2$, we just switch the axes of the coordinate system for the lattice and replace $\vb$ by $\vb'=(b_2, b_1)$ in our results.
\end{remark}

Before doing careful calculations,   one may expect that the density of visible steps goes to $0$ as the number of random walkers increases. This is true if $b_{\star}=1$. However, when $b_{\star}\geq 2$,  the density in Theorem \ref{thm: Theorem for multiple walkers} approaches a positive constant far from $0$ density as $r$ goes to $\infty$. This is a surprisingly new phenomenon. 

\begin{corollary}
Assume $\vb=(b_1, b_2)$ with $\gcd(b_1, b_2)=1$ and $b_{\star}=\min(b_1, b_2)\geq 2$. For any number of random walkers, the proportion of steps at which all walkers are simultaneously $\vb$-visible from the origin is almost surely
$$\geq \frac{1}{\zeta(b_{\star})}\geq \frac{1}{\zeta(2)}=0.6079\ldots,$$
where $\zeta(s)$ is the Riemann zeta-function.
\end{corollary}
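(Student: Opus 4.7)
The plan is to apply Theorem~\ref{thm: Theorem for multiple walkers} and then bound the resulting Euler product termwise from below by $1/\zeta(b_{\star})$. First I would rewrite the local factor at prime $p$ as
$$f_p(r):=1-\frac{1}{p^{b_{\star}}}+\frac{1}{p^{b_{\star}}}\Bigl(1-\frac{1}{p^{b^{\star}}}\Bigr)^{r},$$
and observe that since $0\leq 1-1/p^{b^{\star}}<1$, the quantity $(1-1/p^{b^{\star}})^{r}$ is non-negative, so
$$f_p(r)\geq 1-\frac{1}{p^{b_{\star}}}>0$$
for every prime $p$ (using $b_{\star}\geq 2$).

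Next I would invoke absolute convergence: under the assumption $b_{\star}\geq 2$, the product $\prod_p(1-1/p^{b_{\star}})$ converges absolutely and equals $1/\zeta(b_{\star})$ by the Euler product for the Riemann zeta function. Since both infinite products consist of positive factors and both converge to positive limits, the termwise inequality lifts directly to the product level, yielding
$$\prod_p f_p(r)\geq \prod_p\Bigl(1-\frac{1}{p^{b_{\star}}}\Bigr)=\frac{1}{\zeta(b_{\star})}.$$
Combined with the almost-sure identity in Theorem~\ref{thm: Theorem for multiple walkers}, this already shows $\lim_{n\to\infty}\overline{R}(n)\geq 1/\zeta(b_{\star})$ almost surely, uniformly in the number $r$ of walkers.

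Finally, to obtain the numerical bound, I would use that $\zeta(s)$ is strictly decreasing on $(1,\infty)$, so $b_{\star}\geq 2$ gives $\zeta(b_{\star})\leq \zeta(2)=\pi^{2}/6$, and hence $1/\zeta(b_{\star})\geq 6/\pi^{2}=0.6079\ldots$. There is essentially no substantive obstacle in this argument; the only point requiring minor care is the legitimacy of the termwise comparison of the two infinite products, which is ensured by the strict positivity of each $f_p(r)$ and the absolute convergence guaranteed by $b_{\star}\geq 2$.
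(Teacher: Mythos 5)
Your proposal is correct and is essentially the argument the paper intends: bound each Euler factor below by $1-p^{-b_{\star}}$ since $(1-p^{-b^{\star}})^{r}\geq 0$, identify $\prod_p(1-p^{-b_{\star}})=1/\zeta(b_{\star})$ via the Euler product (convergent and positive because $b_{\star}\geq 2$), and use monotonicity of $\zeta$ to get $1/\zeta(b_{\star})\geq 1/\zeta(2)$. No substantive differences from the paper's (implicit) route.
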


\subsection{Numerical experiments}
We verify the results in our theorems by doing numerical experiments using random generator in Python.

For one random walker $\vb$-visible from watchpoint set $\{ (0,0), (1, 2), (2,1) \} $, we calculate the proportion of $\vb$-visible steps in $\alpha$-random walks within $100, 000$ steps. Since our results are "almost surely", we did the same calculation $10$ times then take the average. We list densities for some values of $\vb$ with $\alpha=0.5$ and $\alpha=0.3$ in Table \ref{table-one-walker-mutiset}.

\begin{table}[htpb]
    \centering
    \begin{tabular}{|c|c|c|c|}
    \hline
       $\vb$  &   Numerical $\alpha=0.5$ &  Numerical $\alpha=0.3$ & Theoretical \\
        \hline
       $(1,2)$ & 0.534592 & 0.535673 & 0.534567 \\
       $(1,3)$ & 0.777417 &  0.777271 &  0.777373 \\
       $(1,4)$ & 0.894337  & 0.894110 & 0.894015 \\
       $(1, 5)$ & 0.949046 & 0.948974   & 0.948994 \\
       $(2, 3)$ & 0.894215 & 0.893807  & 0.894015 \\
       $(2, 5)$ & 0.975107  & 0.975023 &    0.975182 \\
       $(3, 4)$ & 0.975401  & 0.975118  & 0.975182 \\
       $(3, 5)$ &  0.987746  & 0.987856   & 0.987821 \\
       \hline
    \end{tabular}
    \caption{Density of $\vb$-visible steps  from multiple points}
    \label{table-one-walker-mutiset}
\end{table}

For multiple random walkers, we calculate proportions of $\vb$-visible steps for different number of walkers. We choose same $\alpha$-random walk for all walkers with $\alpha=0.5$, then calculate the number of $\vb$-visible steps within $100, 000$ steps and average the same process over $10$ times. See Table \ref{table-multi-walker} for a list of these densities with $\vb=(2,3)$ and $\vb=(3,5)$.

\begin{table}[htpb]
    \centering
    \begin{tabular}{|c|cc|cc|}
    \hline 
   & \multicolumn{2}{c|}{$\vb=(2,3)$} & \multicolumn{2}{|c|}{$\vb=(3,5)$} \\
    \hline
      $r$ & Numerical  & Theoretical & Numerical  & Theoretical\\
      \hline
       2 & 0.933192 & 0.933076 & 0.991948 & 0.992002 \\
       3 & 0.905554 & 0.905515 & 0.988254 & 0.988185 \\
       4 & 0.881253 & 0.881225 & 0.984412 & 0.984484 \\
       5 & 0.859504 & 0.859791 & 0.980724 & 0.980896 \\
       6 & 0.840929 & 0.840850 & 0.977486 & 0.977417 \\
       10 & 0.784229 & 0.784303 & 0.964403 & 0.964525 \\
       20 & 0.716891 & 0.716860 & 0.938527 & 0.938432 \\
       30 & 0.690567 & 0.690364 & 0.919002 & 0.919169 \\
       40 & 0.676851 & 0.676832 & 0.904943 & 0.904881 \\
       50 & 0.668499 & 0.668389 & 0.894245 & 0.894220 \\
       60 & 0.662547 & 0.662484 & 0.886117 & 0.886205 \\
       100 & 0.649852 & 0.649786 & 0.868927 & 0.868973 \\
       200 & 0.638448 & 0.638324 & 0.856645 & 0.856556 \\
       500 & 0.628067 & 0.627636 & 0.845837 & 0.845638 \\
       1000 & 0.623631 & 0.622756 & 0.841814 & 0.841122 \\
       \vdots & \vdots & \vdots & \vdots & \vdots \\
       \hline
       $\infty$ & \multicolumn{2}{|c|}{ ${1}/{\zeta(2)}=0.607927\ldots$} &  \multicolumn{2}{|c|}{ ${1}/{\zeta(3)}=0.831907\ldots$} \\
      \hline
    \end{tabular}
    \caption{Density of visible steps for multiple walkers}
    \label{table-multi-walker}
\end{table}

 In our calculations, the numerical results match our theoretical results very well for at least two decimal digits and three decimal digits most of the time.

We also observe that in the numerical calculations within $100,000$ number of steps, the numerical results tend to be a little bit larger than the theoretical density as the number of walkers increases. Because in this situation the number of steps becomes not large enough relative to the number of walkers, these walkers have not fully spread out within such a fixed number of steps. In particular when some walkers meet each other at some point, the effective number of walkers we observe is actually smaller than the actual number and hence the numerical density tends to be a bit larger. The real 
situation might be much more complicated than this easy analysis, maybe that's why we didn't observe huge difference.

\bigskip
\textbf{Notations.} We use $\mathbb{Z}$ and $\mathbb{N}$ to denote sets of integers and positive integers, respectively. We also use the expressions $f=O(g)$ to
mean $|f|\leq Cg$ for some constant $C>0$. When the constant $C$ depends on some parameters ${\bf \rho}$, we write $f=O_{\bf \rho}(g)$. As usual, we use $[x]$ to denote the largest integer not exceeding the real number $x$ and use $|\mathcal{W}|$ to denote the cardinality of the set $\mathcal{W}$.

\bigskip

\textbf{Acknowledgements.} The first author is partially supported by Shandong Provincial Natural Science Foundation (Grant No. ZR2019BA028). The second listed author is partially supported by the Humboldt Professorship of Harald Helfgott.

\section{Preliminaries}

\subsection{Criteria for {\bf b}-visibility}

For integers $m,n\in\mathbb{Z}$ which are not both zero, we define 
$$
\bgcd(m,n):=\max\{d\in\mathbb{N}:d^{b_1}\mid m,~d^{b_2}\mid n\}.
$$
This generalized $\gcd$-function is bi-multiplicative and determined by its values on prime powers. Note that for any prime $p$ and any integers $k_1,k_2\geq 0$, we have
$$
\bgcd(p^{k_1},p^{k_2})=p^{\min\{[k_1/b_1],~[k_2/b_2]\}}.
$$
If $b_1=b_2=1$, then $\bgcd(m,n)$ is the classical greatest common divisor of $m$ and $n$. By elementary arguments, we derive the following lemma. Here we omit the proof.
\begin{lemma}
The $\bgcd$-function has the following properties.
\label{lem: properties of bgcd}
\begin{itemize}
\item[(i)]
Suppose $d\in\mathbb{N}$ and integers $m,n$ are not both zero, then $d\mid\bgcd(m,n)$ if and only if $d^{b_1}\mid m$ and $d^{b_2}\mid n$.
\item[(ii)] Suppose $b_1\leq b_2$, then for any integers $m,n$ which are not both zero, we have $\bgcd(m,n)=\bgcd(m+an,n)$ for any $a\in\mathbb{Z}$.
\end{itemize}
\end{lemma}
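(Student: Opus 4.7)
The plan is to prove both parts directly from the prime-power evaluation of $\bgcd$ given in the setup, using only $p$-adic valuations and the already-stated bi-multiplicativity. I expect both parts to be short; the main care needed is tracking floor functions correctly in (i) and being honest about why the hypothesis $b_1\leq b_2$ is used in (ii).

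For part (i), I would combine bi-multiplicativity with the stated formula $\bgcd(p^{k_1},p^{k_2}) = p^{\min\{[k_1/b_1],\,[k_2/b_2]\}}$ to obtain, for every prime $p$,
\[
v_p\bigl(\bgcd(m,n)\bigr) \;=\; \min\bigl\{[v_p(m)/b_1],\ [v_p(n)/b_2]\bigr\},
\]
where $v_p$ is the usual $p$-adic valuation. Then for any $d\in\mathbb{N}$, the divisibility $d\mid\bgcd(m,n)$ is equivalent to $v_p(d)\leq v_p(\bgcd(m,n))$ for every prime $p$. Using the elementary equivalence $e\leq[k/b]\iff be\leq k$ for non-negative integers $e,k$ and a positive integer $b$, this rewrites as $b_1 v_p(d)\leq v_p(m)$ and $b_2 v_p(d)\leq v_p(n)$ for every $p$, which is exactly the conjunction $d^{b_1}\mid m$ and $d^{b_2}\mid n$.

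For part (ii), I would apply (i) as a black box. By (i), the equality $\bgcd(m,n)=\bgcd(m+an,n)$ is equivalent to the assertion that for every $d\in\mathbb{N}$,
\[
\bigl(d^{b_1}\mid m \text{ and } d^{b_2}\mid n\bigr) \;\Longleftrightarrow\; \bigl(d^{b_1}\mid m+an \text{ and } d^{b_2}\mid n\bigr).
\]
The condition $d^{b_2}\mid n$ appears on both sides, so it suffices, under the assumption $d^{b_2}\mid n$, to show that $d^{b_1}\mid m$ iff $d^{b_1}\mid m+an$. This is the step where the hypothesis $b_1\leq b_2$ enters: it gives $d^{b_1}\mid d^{b_2}\mid n$, hence $d^{b_1}\mid an$, so the two versions of the $d^{b_1}$-divisibility condition on the first coordinate are equivalent.

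The main obstacle is really just bookkeeping with floors and valuations in (i); there is no delicate inequality to chase. The only conceptual point worth flagging is the asymmetric role of the coordinates in (ii): one cannot in general absorb an arbitrary $\mathbb{Z}$-multiple of $n$ into $m$ unless the exponent $b_1$ demanded on the $m$-side is dominated by the exponent $b_2$ already supplied on the $n$-side, which is precisely the hypothesis $b_1\leq b_2$. This also matches Remark \ref{remark-b1-b2}, where the case $b_1\geq b_2$ is handled by swapping coordinates.
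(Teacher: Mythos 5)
Your proof is correct. The paper itself omits the proof of this lemma (it is stated as following "by elementary arguments"), and your argument is exactly the natural elementary one: for (i), reducing to the $p$-adic valuation formula $v_p(\bgcd(m,n))=\min\{[v_p(m)/b_1],[v_p(n)/b_2]\}$ and the equivalence $e\leq[k/b]\iff be\leq k$, and for (ii), deducing the invariance under $m\mapsto m+an$ from (i) via $d^{b_1}\mid d^{b_2}\mid n$, which is precisely where $b_1\leq b_2$ is needed. The only cosmetic point worth a word is the degenerate case where one of $m$, $n$ (or $m+an$) is zero, which your valuation bookkeeping handles with the usual convention $v_p(0)=\infty$; with that noted, nothing is missing.
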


By a similar proof as Corollary $1$ of \cite{BEH}, we have the following criteria for {\vb}-visiblity.

\begin{lemma}\label{lem: criteria for b-visiblity}
Suppose lattice points $P=(p_1,p_2)\in\mathbb{Z}^2$ and $Q=(q_1,q_2)\in\mathbb{Z}^2$ satisfy $p_1\neq q_1$ and $p_2\neq q_2$, then $P$ is ${\bf b}$-visible from $Q$ if and only if $\bgcd(p_1-q_1,p_2-q_2)=1$.
\end{lemma}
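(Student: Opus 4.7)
The plan is to parametrize the curve segment joining $Q$ and $P$ and reduce the existence of an intermediate lattice point to an arithmetic condition on $\bgcd(p_1-q_1,p_2-q_2)$. Set $\Delta_1=p_1-q_1$ and $\Delta_2=p_2-q_2$, both nonzero by hypothesis. Since the curve $a_1(y-q_2)^{b_1}=a_2(x-q_1)^{b_2}$ from Definition \ref{def: b-visibility} must pass through $P$, one may take $a_1=\Delta_1^{b_2}$ and $a_2=\Delta_2^{b_1}$; the map
\[
t\mapsto(q_1+\Delta_1 t^{b_1},\,q_2+\Delta_2 t^{b_2}),\qquad t\in[0,1],
\]
then traces a continuous path on the curve from $Q$ (at $t=0$) to $P$ (at $t=1$), and intermediate lattice points on the segment correspond to parameter values $t_0\in(0,1)$ for which both coordinates are integers.

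For the implication $\bgcd(\Delta_1,\Delta_2)=1\Rightarrow\vb$-visibility, I would argue by contradiction: suppose a lattice point $R=(r_1,r_2)\neq P,Q$ on the segment exists with parameter $t_0\in(0,1)$. Then $t_0^{b_1}=(r_1-q_1)/\Delta_1$ and $t_0^{b_2}=(r_2-q_2)/\Delta_2$ are nonzero rationals. Using $\gcd(b_1,b_2)=1$ and B\'ezout's identity $1=b_1 x+b_2 y$, the rationality of $t_0=t_0^{b_1 x+b_2 y}$ follows (the condition $t_0\neq 0$ justifies negative exponents). Writing $t_0=M/N$ in lowest terms with $M,N\geq 1$, the integrality of $\Delta_1 M^{b_1}/N^{b_1}$ combined with $\gcd(M^{b_1},N^{b_1})=1$ forces $N^{b_1}\mid\Delta_1$, and similarly $N^{b_2}\mid\Delta_2$. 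By Lemma \ref{lem: properties of bgcd}(i) this gives $N\mid\bgcd(\Delta_1,\Delta_2)=1$, so $N=1$; but then $t_0$ is an integer in $(0,1)$, a contradiction.

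For the converse direction, I would directly exhibit an intermediate lattice point when $d:=\bgcd(\Delta_1,\Delta_2)>1$. The value $t_0=1/d\in(0,1)$ produces the point $R=(q_1+\Delta_1/d^{b_1},\,q_2+\Delta_2/d^{b_2})$, which has integer coordinates thanks to the defining divisibilities $d^{b_1}\mid\Delta_1$ and $d^{b_2}\mid\Delta_2$. By construction $R$ lies on the curve segment and is distinct from $Q$ and $P$ (since $0<1/d<1$), so $P$ is not $\vb$-visible from $Q$.

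The main obstacle I anticipate is the rationality step via B\'ezout, particularly when $x$ or $y$ is negative so that $t_0^{b_1 x+b_2 y}$ involves reciprocals; this is defused by the observation that $t_0\neq 0$ makes these reciprocals well defined. A secondary subtlety is to verify that the polynomial parametrization really reproduces the curve segment referred to in Definition \ref{def: b-visibility} even when one of $b_1,b_2$ is even (so the implicit equation has multiple real branches); this can be settled by noting that the parametrized path is a continuous arc on the curve connecting $Q$ to $P$ and hence coincides with the segment meant in the definition.
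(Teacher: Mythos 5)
Your argument is correct: the parametrization $t\mapsto(q_1+\Delta_1t^{b_1},q_2+\Delta_2t^{b_2})$ does trace exactly the curve segment of Definition \ref{def: b-visibility}, the B\'ezout step legitimately yields rationality of $t_0$, and the two divisibility arguments (via Lemma \ref{lem: properties of bgcd}(i)) give both implications, with your construction $t_0=1/d$ handling the converse. The paper itself omits the proof, citing Corollary 1 of \cite{BEH}, and your argument is essentially that standard one, so there is nothing further to reconcile.
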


\begin{remark}\label{remark-equal-coordinate}
	We remark that for lattice points $P=(p_1,p_2)$ and $Q=(q_1,q_2)$, if $p_1=q_1$, then the curve joining them in Definition \ref{def: b-visibility} degenerates to a vertical line and $Q$ is {\vb}-visible from $P$ if and only $q_2-p_2=\pm 1$. The $p_2=q_2$ case is similar.
\end{remark}

We note that the cardinality of watchpoints set satisfying condition $(*)$ can't be very large.
\begin{lemma}\label{lem: cardinality J<=}
	If a watchpoints set $\mathcal{W}$ satisfies condition $(*)$, then its cardinality $|\mathcal{W}|\leq 2^{b_1+b_2}$.
\end{lemma}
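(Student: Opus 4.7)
The plan is to use a pigeonhole argument via a reduction map modulo powers of $2$. Specifically, I would consider the map
$$
\phi: \mathcal{W}\longrightarrow (\mathbb{Z}/2^{b_1}\mathbb{Z}) \times (\mathbb{Z}/2^{b_2}\mathbb{Z}), \qquad (p_1,p_2)\longmapsto (p_1 \bmod 2^{b_1},\; p_2 \bmod 2^{b_2}).
$$
The target has exactly $2^{b_1+b_2}$ elements, so it suffices to show that $\phi$ is injective on $\mathcal{W}$; the bound $|\mathcal{W}|\leq 2^{b_1+b_2}$ then follows immediately.

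The injectivity is essentially a restatement of condition $(*)$ at the prime $p=2$. Suppose for contradiction that $P=(p_1,p_2)$ and $Q=(q_1,q_2)$ are distinct elements of $\mathcal{W}$ with $\phi(P)=\phi(Q)$, i.e.\ $2^{b_1}\mid p_1-q_1$ and $2^{b_2}\mid p_2-q_2$. In the generic case $p_1\neq q_1$ and $p_2\neq q_2$, Lemma \ref{lem: properties of bgcd}(i) gives $2\mid \bgcd(p_1-q_1,p_2-q_2)$, so $\bgcd(p_1-q_1,p_2-q_2)\geq 2\neq 1$, and Lemma \ref{lem: criteria for b-visiblity} forces $P$ and $Q$ to fail to be mutually $\vb$-visible, contradicting $(*)$.

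The only thing to check carefully is the degenerate case handled by Remark \ref{remark-equal-coordinate}: say $p_1=q_1$ (so automatically $2^{b_1}\mid p_1-q_1$). Since $P\neq Q$ we must have $p_2\neq q_2$, and $\vb$-visibility along the resulting vertical line requires $|p_2-q_2|=1$. But the assumption $\phi(P)=\phi(Q)$ gives $2^{b_2}\mid p_2-q_2$, hence $|p_2-q_2|\geq 2^{b_2}\geq 2$, again a contradiction. The case $p_2=q_2$ is symmetric.

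I do not expect a significant obstacle here: the whole argument is a direct pigeonhole, and the only subtlety is remembering to treat the two axis-aligned degenerate configurations that Lemma \ref{lem: criteria for b-visiblity} does not cover. Once those are dispatched via Remark \ref{remark-equal-coordinate}, injectivity of $\phi$ is immediate and the lemma follows.
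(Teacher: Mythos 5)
Your proof is correct and follows essentially the same route as the paper: a pigeonhole argument via the reduction map $(u,v)\mapsto(u\bmod 2^{b_1},\, v\bmod 2^{b_2})$, where a coincidence of residues forces $2\mid\bgcd(u_1-u_2,v_1-v_2)$ by Lemma \ref{lem: properties of bgcd}(i), contradicting condition $(*)$ through Lemma \ref{lem: criteria for b-visiblity}. Your explicit handling of the axis-aligned degenerate case via Remark \ref{remark-equal-coordinate} is a small point the paper leaves implicit, but it does not alter the argument.
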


\begin{proof}
	Consider the map
	$$
	\lambda:\mathcal{W}\rightarrow\widetilde{\mathcal{W}}:=\{(u\bmod 2^{b_1}, v\bmod 2^{b_2})\mid (u,v)\in\mathcal{W}\}.
	$$
	Observe that the cardinality of $\widetilde{\mathcal{W}}$ is at most $2^{b_1+b_2}$. Hence, if $\widetilde{\mathcal{W}}$ has more than $2^{b_1+b_2}$ points, there must exist two
	distinct points $(u_1,v_1),(u_2,v_2)\in\mathcal{W}$ such that $\lambda((u_1,v_1))=\lambda((u_2,v_2))$. Thus we have
	$$
	2^{b_1}\mid (u_1-u_2)~\text{and}~2^{b_2}\mid (u_2-v_2),
	$$
	which implies $\bgcd(u_1-u_2,v_1-v_2)\geq 2$ by (i) of Lemma \ref{lem: properties of bgcd}. This contradicts our assumption on $\mathcal{W}$.
\end{proof}

\subsection{Useful lemmas}

To prove our theorems, we need the following key lemma from probability, which is essentially the second moment method.
\begin{lemma}[\cite{CFF}, Lemma 2.5]\label{lem: second moment method}
Let $X_i$, $i=1,2,\cdots$ be a sequence of uniformly bounded random variables such that
$$
\lim\limits_{n\rightarrow\infty}\mathbb{E}(\overline{S}_n)=\mu,
$$
where
$$
\overline{S}_n=\frac{1}{n}\sum\limits_{1\leq i\leq n}X_i.
$$
If there exists a constant $\delta>0$ such that the variance  $\mathbb{V}(\overline{S}_n)=O(n^{-\delta})$ for $n\geq 1$, then we have
$$
\lim\limits_{n\rightarrow\infty}\overline{S}_n=\mu
$$
almost surely.
\end{lemma}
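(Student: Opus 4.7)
The plan is to combine Chebyshev's inequality with the Borel--Cantelli lemma along a sparse subsequence, and then interpolate between consecutive terms of that subsequence using the uniform boundedness of the $X_i$. Fix $M \geq 1$ such that $|X_i| \leq M$ for every $i$, and set $\mu_n := \mathbb{E}(\overline{S}_n)$, so that by hypothesis $\mu_n \to \mu$.

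Chebyshev's inequality gives, for every $\epsilon > 0$,
$$\mathbb{P}(|\overline{S}_n - \mu_n| > \epsilon) \leq \frac{\mathbb{V}(\overline{S}_n)}{\epsilon^2} = O_\epsilon(n^{-\delta}).$$
If $\delta > 1$ then $\sum_n n^{-\delta}$ converges and a direct application of Borel--Cantelli yields $\overline{S}_n - \mu_n \to 0$ almost surely, which together with $\mu_n \to \mu$ finishes the proof. In general $\delta$ may be small, so I would pick an integer $K$ with $K\delta > 1$ and pass to the sparse subsequence $n_k := k^K$. Then $\sum_k n_k^{-\delta} = \sum_k k^{-K\delta} < \infty$, and Borel--Cantelli applied along $\{n_k\}$ produces $\overline{S}_{n_k} - \mu_{n_k} \to 0$ almost surely, hence $\overline{S}_{n_k} \to \mu$ almost surely.

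It remains to promote the subsequence statement to the full sequence. For any $n$ with $n_k \leq n < n_{k+1}$, the decomposition
$$\overline{S}_n - \overline{S}_{n_k} = -\frac{n - n_k}{n}\,\overline{S}_{n_k} + \frac{1}{n}\sum_{i=n_k+1}^{n} X_i,$$
combined with $|X_i| \leq M$ (which also forces $|\overline{S}_{n_k}| \leq M$), produces the deterministic bound
$$|\overline{S}_n - \overline{S}_{n_k}| \leq \frac{2M(n - n_k)}{n} \leq \frac{2M(n_{k+1} - n_k)}{n_k}.$$
For $n_k = k^K$ the right-hand side is $O(1/k)$ and tends to $0$, so convergence along $\{n_k\}$ extends to all $n$, giving $\overline{S}_n \to \mu$ almost surely.

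The main obstacle, and really the only nontrivial point, is the competing pressure on the subsequence $\{n_k\}$: it must be sparse enough that $\sum_k n_k^{-\delta}$ converges (forcing $n_k$ to grow essentially faster than $k^{1/\delta}$), yet dense enough that the relative gaps $(n_{k+1} - n_k)/n_k$ vanish (so that the uniform bound on the $X_i$ is sufficient for interpolation). The polynomial choice $n_k = k^K$ with any integer $K > 1/\delta$ reconciles both constraints simultaneously, which is why no further assumption on the $X_i$ beyond boundedness is needed.
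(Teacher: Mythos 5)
Your argument is correct: Chebyshev's inequality, Borel--Cantelli along the polynomial subsequence $n_k=k^{K}$ with $K\delta>1$ (intersecting over a countable family of $\epsilon$'s), and the deterministic interpolation between $n_k$ and $n_{k+1}$ via the uniform bound $|X_i|\leq M$ constitute the standard second moment method, and every step as you state it goes through. The paper itself gives no proof of this lemma but imports it from \cite{CFF}, where the argument is essentially the one you describe, so your proposal matches the intended proof.
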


The following binomial theorem with a congurence condition is also important.
\begin{lemma}[\cite{CFF}, Lemma 2.1]\label{lem: sum with congruence condition}
Suppose $0<\alpha<1$ and $n\in\mathbb{N}$. For any integer $1\leq d \leq n$ and any $
a\in\{0, 1,\cdots, d-1\}$, we have
$$
\sum\limits_{\substack{0\leq k\leq n\\k\equiv a\bmod d}}{n\choose k}
\alpha^k(1-\alpha)^{n-k}=\frac{1}{d}+O_{\alpha}(n^{-1/2})
$$
as $n\rightarrow\infty$.
\end{lemma}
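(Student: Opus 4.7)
The plan is to apply the standard roots of unity filter to extract the congruence $k\equiv a\pmod d$ and then to control the resulting exponential sum. Writing $\omega=e^{2\pi i/d}$, the identity $\mathbf{1}_{k\equiv a\bmod d}=\frac{1}{d}\sum_{j=0}^{d-1}\omega^{j(k-a)}$, swapping the order of summation, and applying the binomial theorem give
$$
\sum_{\substack{0\leq k\leq n\\ k\equiv a\bmod d}}\binom{n}{k}\alpha^k(1-\alpha)^{n-k}=\frac{1}{d}\sum_{j=0}^{d-1}\omega^{-ja}(1-\alpha+\alpha\omega^j)^n.
$$
The principal term $j=0$ contributes exactly $1/d$, so it suffices to prove
$$
E:=\frac{1}{d}\sum_{j=1}^{d-1}|1-\alpha+\alpha\omega^j|^n=O_\alpha(n^{-1/2}).
$$

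A direct computation yields $|1-\alpha+\alpha\omega^j|^2=1-c\sin^2(\pi j/d)$ with $c:=4\alpha(1-\alpha)\in(0,1]$. Using Jordan's inequality $\sin(\pi t)\geq 2t$ for $0\leq t\leq 1/2$ (applied to $\min(j,d-j)/d$), the bound $1-x\leq e^{-x}$, and the symmetry $j\leftrightarrow d-j$, I get
$$
E\leq\frac{2}{d}\sum_{m=1}^{\lfloor d/2\rfloor}e^{-2cnm^2/d^2}\leq\frac{2}{d}\sum_{m\geq 1}e^{-am^2},\qquad a:=\frac{2cn}{d^2}.
$$
Next I would split on whether $a\geq 1$ or $a<1$. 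If $a\geq 1$ (equivalently $d\leq\sqrt{2cn}$), the geometric bound $\sum_{m\geq 1}e^{-am^2}\leq\sum_{m\geq 1}e^{-am}\leq\tfrac{e}{e-1}e^{-a}$ gives $E\leq\frac{2e}{(e-1)\sqrt{2cn}}\cdot\sqrt{a}\,e^{-a}$, and since the function $\sqrt{t}\,e^{-t}$ is decreasing on $[1,\infty)$ with value $e^{-1}$ at $t=1$, this is $O_\alpha(n^{-1/2})$. If $a<1$, comparing with the Gaussian integral $\sum_{m\geq 1}e^{-am^2}\leq\int_0^\infty e^{-ax^2}dx=\tfrac{\sqrt{\pi}}{2\sqrt{a}}$ gives $E\leq\frac{\sqrt{\pi}}{d\sqrt{a}}=\frac{\sqrt{\pi}}{\sqrt{2cn}}$, again $O_\alpha(n^{-1/2})$.

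The main obstacle is the intermediate regime $d\asymp\sqrt{n}$. For $d$ much smaller than $\sqrt{n}$ the individual terms $|1-\alpha+\alpha\omega^j|^n$ decay exponentially in $n$, while for $d$ much larger than $\sqrt{n}$ the trivial bound $E\leq (d-1)/d$ is useless and the Gaussian aggregation is required. It is exactly the elementary observation $\sup_{t\geq 1}\sqrt{t}\,e^{-t}=e^{-1}$ that bridges the two regimes and pins the final error down to the correct order $n^{-1/2}$ rather than something like $n^{-1/2}\log n$.
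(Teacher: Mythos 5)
Your proof is correct: the roots-of-unity filter, the identity $|1-\alpha+\alpha\omega^j|^2=1-4\alpha(1-\alpha)\sin^2(\pi j/d)$, and the Gaussian-type tail estimate give the error term $O_\alpha(n^{-1/2})$ uniformly in $1\leq d\leq n$. Note that the paper itself offers no proof of this lemma, quoting it from \cite{CFF}, and your argument is essentially the standard one used there, so there is nothing to reconcile. One small simplification: the case distinction $a\geq 1$ versus $a<1$ is unnecessary, since the monotone comparison $\sum_{m\geq 1}e^{-am^2}\leq\int_0^\infty e^{-ax^2}\,dx=\tfrac{\sqrt{\pi}}{2\sqrt{a}}$ already yields $E\leq\sqrt{\pi}/\sqrt{2cn}$ for every $a>0$, covering both regimes at once.
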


The following Lemmas \ref{lem: sum with gcdb condition}-\ref{lem: mean of f_b^r(n)} are main ingredients for proofs of our theorems. We prove them in Section \ref{section-lemma-proofs}.

\begin{lemma}\label{lem: sum with gcdb condition}
Suppose $m,J\in\mathbb{N}$, $0<\alpha<1$, and vectors ${\bf s}=(s_1,\cdots,s_J)\in\mathbb{Z}^J$ and ${\bf t}=(t_1,\cdots,t_J)\in\mathbb{Z}^J$ satisfy $\bgcd(s_{j_1}-s_{j_2},t_{j_1}-t_{j_2})=1$ for any $1\leq j_1\neq j_2\leq J$. For $n>\max\limits_{1\leq j\leq J}|s_j|$ and any $\varepsilon>0$, we have
$$
\sum\limits_{\substack{0\leq k\leq m\\ \bgcd(n-s_j,k-t_j)=1,1\leq j\leq J}}{m\choose k}
\alpha^k(1-\alpha)^{m-k}=f_{{\bf b},{\bf s}}(n)+O_{\alpha,J,\varepsilon}(n^{\varepsilon}{m}^{-1/2}),
$$
where
\begin{align}\label{eq: expression of f_bu(n)}
f_{{\bf b},{\bf s}}(n)=\sum_{\substack{d_j^{b_1}\mid n-s_j,1\leq j\leq J\\ \gcd(d_{j_1},d_{j_2})=1,\forall 1\leq j_1\neq j_2\leq J}}\frac{\mu(d_1)\cdots\mu(d_J)}{(d_1\cdots d_J)^{b_2}}
\end{align}
and $\mu$ is the M{\"o}bius function. 
\end{lemma}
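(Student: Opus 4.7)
The strategy is the standard one: M\"obius inversion to strip each $\vb$-coprimality condition, followed by the Chinese Remainder Theorem to combine the residual congruences on $k$, and then Lemma \ref{lem: sum with congruence condition} to evaluate the resulting binomial sum. Concretely, by Lemma \ref{lem: properties of bgcd}(i), the indicator of $\bgcd(n-s_j,k-t_j)=1$ equals $\sum_{d_j}\mu(d_j)$, where $d_j$ ranges over positive integers with $d_j^{b_1}\mid n-s_j$ and $d_j^{b_2}\mid k-t_j$. Multiplying these identities for $j=1,\ldots,J$ and swapping the order of summation, the left-hand side becomes
\begin{align*}
\sum_{\substack{d_1,\ldots,d_J\geq 1\\ d_j^{b_1}\mid n-s_j,\,\forall j}}\mu(d_1)\cdots\mu(d_J)\sum_{\substack{0\leq k\leq m\\ d_j^{b_2}\mid k-t_j,\,\forall j}}\binom{m}{k}\alpha^k(1-\alpha)^{m-k}.
\end{align*}

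Next I will show the outer sum can be restricted to pairwise coprime tuples. If a prime $p$ divided both $d_{j_1}$ and $d_{j_2}$ for distinct $j_1,j_2$, then $p^{b_1}$ would divide $n-s_{j_1}$ and $n-s_{j_2}$, hence $p^{b_1}\mid s_{j_1}-s_{j_2}$; the analogous argument with $b_2$ gives $p^{b_2}\mid t_{j_1}-t_{j_2}$. By Lemma \ref{lem: properties of bgcd}(i) this forces $p\mid\bgcd(s_{j_1}-s_{j_2},t_{j_1}-t_{j_2})$, contradicting the hypothesis. For pairwise coprime $d_j$'s, the Chinese Remainder Theorem collapses the conditions $d_j^{b_2}\mid k-t_j$ into a single congruence $k\equiv a\pmod{D}$ with $D=(d_1\cdots d_J)^{b_2}$, so Lemma \ref{lem: sum with congruence condition} yields the inner sum as $1/D+O_\alpha(m^{-1/2})$ whenever $D\leq m$. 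Collecting the $1/D$ main terms over all admissible tuples recovers $f_{\vb,\mathbf{s}}(n)$ as defined in \eqref{eq: expression of f_bu(n)}.

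It remains to bound the error. The number of admissible $(d_1,\ldots,d_J)$ with $d_j^{b_1}\mid n-s_j$ is at most $\prod_j\tau(n-s_j)\ll_{J,\varepsilon}n^\varepsilon$ by the divisor bound, so the accumulated $O_\alpha(m^{-1/2})$ errors from Lemma \ref{lem: sum with congruence condition} contribute $O_{\alpha,J,\varepsilon}(n^\varepsilon m^{-1/2})$. For tuples with $D>m$ the inner $k$-sum contains at most one term, which is $O_\alpha(m^{-1/2})$ by the standard bound on the mode of the binomial distribution; summed over the $O_{J,\varepsilon}(n^\varepsilon)$ such tuples this again contributes $O_{\alpha,J,\varepsilon}(n^\varepsilon m^{-1/2})$. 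The corresponding tail of $f_{\vb,\mathbf{s}}(n)$ is $\sum_{D>m}1/D\leq m^{-1/2}\sum_{d_j^{b_1}\mid n-s_j}D^{-1/2}\ll_{J,\varepsilon}n^\varepsilon m^{-1/2}$, again by divisor bounds.

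The main point requiring care is this $D>m$ bookkeeping, where one must simultaneously control the trivial bound on the LHS contribution and the tail of the main term $f_{\vb,\mathbf{s}}(n)$; apart from that, the proof is a clean assembly of M\"obius inversion, CRT, and Lemma \ref{lem: sum with congruence condition}.
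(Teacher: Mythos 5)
Your proof is correct and follows essentially the same route as the paper's: M\"obius inversion via Lemma \ref{lem: properties of bgcd}(i), pairwise coprimality of the $d_j$ deduced from the hypothesis $\bgcd(s_{j_1}-s_{j_2},t_{j_1}-t_{j_2})=1$, the Chinese Remainder Theorem combined with Lemma \ref{lem: sum with congruence condition}, and a divisor bound $O_{\varepsilon}(n^{\varepsilon})$ on the number of admissible tuples to control the accumulated errors. Your separate treatment of tuples with modulus $(d_1\cdots d_J)^{b_2}>m$ is only a minor refinement (the paper applies Lemma \ref{lem: sum with congruence condition} without commenting on its restriction that the modulus not exceed $m$) and does not change the substance of the argument.
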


The mean value of $f_{{\bf b},{\bf s}}(n)$ plays an important role in our computations of expectations and variances of $\overline{S}_n$ and $\overline{R}_n$. With the help of Lemma \ref{lem: sum with gcdb condition}, we derive the following result.

\begin{lemma}\label{lem: mean of f_bu}
Let $f_{{\bf b},{\bf s}}(n)$ be given by \eqref{eq: expression of f_bu(n)}. If $b_1\leq b_2$, then for $x\geq 2$ we have
$$
\sum\limits_{n\leq x}f_{{\bf b},{\bf s}}(n)=x\prod\limits_{p}\bigg(1-\frac{J}{p^{b_1+b_2}}\bigg)+O_{{\bf b},{\bf s}}\big(\log^{J} x\big).
$$
\end{lemma}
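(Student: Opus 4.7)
The plan is to open up the definition of $f_{\vb, {\bf s}}(n)$, swap the order of summation, and then apply the Chinese Remainder Theorem to extract an Euler product. Writing out the definition and interchanging sums gives
\[
\sum_{n \leq x} f_{\vb, {\bf s}}(n)
= \sum_{\substack{(d_1,\ldots,d_J) \\ \text{pairwise coprime} \\ d_j^{b_1} \leq x + |s_j|}} \frac{\mu(d_1) \cdots \mu(d_J)}{(d_1 \cdots d_J)^{b_2}} \cdot \#\{\, n \leq x : d_j^{b_1} \mid n - s_j, \; 1 \leq j \leq J \,\}.
\]
Because the $d_j$ are pairwise coprime, so are the moduli $d_j^{b_1}$, and CRT turns the inner count into $x / (d_1 \cdots d_J)^{b_1} + O(1)$.

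I would then split the expression into its main part and the $O(1)$ remainder. The main part is
\[
x \sum_{\substack{(d_1, \ldots, d_J) \\ \text{pairwise coprime} \\ d_j^{b_1} \leq x + |s_j|}} \frac{\mu(d_1) \cdots \mu(d_J)}{(d_1 \cdots d_J)^{b_1 + b_2}},
\]
which I would extend to all pairwise coprime tuples. The resulting sum factors over primes: since $\mu$ forces each $d_j$ to be squarefree and pairwise coprimality allows at most one coordinate to be divisible by any given prime $p$, the local factor at $p$ equals $1 - J/p^{b_1+b_2}$, where the $-J$ accounts for the $J$ choices of which coordinate absorbs a single copy of $p$. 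This produces the advertised main term $x \prod_p \bigl(1 - J/p^{b_1+b_2}\bigr)$.

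The error analysis has two pieces. The tail incurred by extending the main-term sum is bounded, up to constants depending on $J$, by
\[
x \sum_{d > x^{1/b_1}} \frac{1}{d^{b_1+b_2}} \prod_{j \geq 2} \sum_{d_j \geq 1} \frac{1}{d_j^{b_1+b_2}} \ll x^{(1 - b_2)/b_1} \ll 1,
\]
since $b_2 \geq 1$. The $O(1)$ remainder summed over valid tuples is dominated by
\[
\prod_{j=1}^{J} \sum_{d \leq (x + |s_j|)^{1/b_1}} \frac{1}{d^{b_2}} \ll \log^J x,
\]
the worst case being $b_2 = 1$ (where each factor is $O(\log x)$); when $b_2 \geq 2$ each such factor is already $O(1)$. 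Together, these give the stated error term $O_{\vb, {\bf s}}(\log^J x)$.

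The main obstacle is mostly bookkeeping: one must verify that the pairwise coprimality condition propagates correctly through both the CRT count and the Euler factorization, so that the local factor comes out to exactly $1 - J/p^{b_1+b_2}$ rather than something requiring a more refined inclusion--exclusion over which coordinate absorbs $p$. The assumption $b_1 \leq b_2$ is used to keep all error estimates in the clean form above, consistent with Remark~\ref{remark-b1-b2}.
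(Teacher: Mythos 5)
Your proposal is correct and follows essentially the same route as the paper: open up the definition of $f_{\vb,{\bf s}}$, swap the order of summation, count $n$ in the pairwise-coprime arithmetic progressions via CRT as $x/(d_1\cdots d_J)^{b_1}+O(1)$, extend the ranges of the $d_j$ (your tail bound $x^{(1-b_2)/b_1}\ll 1$ is in fact slightly cleaner than the paper's $x^{1-b_2/b_1}$), and collect the $O(1)$ remainders into $O(\log^J x)$. The only cosmetic difference is that you factor the completed coprime sum directly into the Euler product, whereas the paper first collapses it to $\sum_d \mu(d)\tau_J(d)d^{-(b_1+b_2)}$ via $d=d_1\cdots d_J$; these are the same computation.
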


If $J=1$ and ${\bf s}=s_1=0$, then the function $f_{\vb,{\bf s}}(n)$ is multiplicative, in which case we denote it by
$$
f_{{\bf b}}(n):=\sum\limits_{d^{b_1}\mid n}\frac{\mu(d)}{d^{b_2}}.
$$
For any prime $p$ and any integer $k\geq 1$, the above definition gives
\begin{equation}
f_{{\bf b}}(p^k)=
\begin{cases}
1,& \text{if}~ 1\leq k< b_1,\\
1-p^{-b_2},& \text{if}~k\ge b_1.
\end{cases}
\end{equation}
It follows that $0<f_{{\bf b}}(n)\leq 1$ for any $n\in\mathbb{N}$.

For the case of multiple random walkers, the following higher moments of $f_{{\bf b}}(n)$ is needed to prove Theorem \ref{thm: Theorem for multiple walkers}.
\begin{lemma}\label{lem: mean of f_b^r(n)}
Let $r\geq 1$ be an integer. For $x\geq 2$ and any $\varepsilon>0$, we have
$$
\sum\limits_{n\leq x}f_{\bf b}(n)^r=C_{{\bf b},r}x+O_{\varepsilon,{\bf b},r}(x^{\frac{1}{2}+\varepsilon}),
$$
where the constant
$$
C_{{\bf b},r}=\prod_{p}\bigg( 1-\frac{1}{p^{b_1}}+\frac{1}{p^{b_1}}\Big(1-\frac{1}{p^{b_2}}\Big)^r \bigg)
$$
with $p$ running over all primes.
\end{lemma}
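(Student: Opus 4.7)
The plan is to analyze the Dirichlet series
\[
F(s) := \sum_{n=1}^{\infty} \frac{f_{\vb}(n)^r}{n^s}
\]
and extract the asymptotic through a factorization against $\zeta(s)$. Since $f_{\vb}$ is multiplicative so is $f_{\vb}^r$; using $f_{\vb}(p^k)^r = 1$ for $1 \leq k < b_1$ and $f_{\vb}(p^k)^r = (1 - p^{-b_2})^r$ for $k \geq b_1$, summing the geometric tail at each prime yields the Euler factor
\[
F_p(s) \;=\; \frac{1 - p^{-b_1 s}\bigl(1 - (1 - p^{-b_2})^r\bigr)}{1 - p^{-s}},
\]
so that $F(s) = \zeta(s)\, G(s)$ with $G(s) := \prod_{p}\bigl(1 - p^{-b_1 s}(1 - (1 - p^{-b_2})^r)\bigr)$. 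Evaluating at $s = 1$ gives $G(1) = C_{\vb, r}$, the leading constant predicted by the statement.

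Since each local factor of $G$ is a polynomial of degree $b_1$ in $p^{-s}$, writing $G(s) = \sum_d g(d)\, d^{-s}$ produces a multiplicative function $g$ supported on integers of the form $d = m^{b_1}$ with $m$ squarefree, satisfying
\[
g(m^{b_1}) \;=\; \prod_{p \mid m}\bigl((1 - p^{-b_2})^r - 1\bigr).
\]
The elementary estimate $|1 - (1-t)^r| \leq r t$ on $[0, 1]$ then gives the uniform bound $|g(m^{b_1})| \leq r^{\omega(m)}/m^{b_2}$, where $\omega(m)$ denotes the number of distinct prime factors of $m$.

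The identity $F = \zeta G$ translates to $f_{\vb}(n)^r = \sum_{d \mid n} g(d)$, so by standard hyperbola decomposition
\[
\sum_{n \leq x} f_{\vb}(n)^r \;=\; \sum_{d \leq x} g(d)\,\lfloor x/d \rfloor \;=\; x\sum_{d=1}^{\infty}\frac{g(d)}{d} \;-\; x\!\!\sum_{d > x}\frac{g(d)}{d} \;+\; O\!\Bigl(\sum_{d \leq x}|g(d)|\Bigr).
\]
The first piece equals $x\, G(1) = C_{\vb, r}\, x$. For the tail, substituting $d = m^{b_1}$ and using $r^{\omega(m)} \ll_{\varepsilon, r} m^{\varepsilon}$ together with $b_1 + b_2 \geq 2$ delivers a bound of $O_{\varepsilon, \vb, r}(x^{\varepsilon})$; the remainder $\sum_{d \leq x}|g(d)|$ is $O_r((\log x)^r)$ when $b_2 = 1$ (by Mertens' theorem) and $O_{\vb, r}(1)$ when $b_2 \geq 2$. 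Both errors fit comfortably inside the claimed $O_{\varepsilon, \vb, r}(x^{1/2 + \varepsilon})$.

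There is no serious obstacle here: once the factorization $F = \zeta G$ is identified, the support of $g$ (only $b_1$-th powers of squarefrees) is so sparse that all relevant divisor sums are short and admit essentially trivial bounds. A Perron-formula argument would give the same estimate in a more systematic way: $G(s)$ is analytic and bounded on $\Re s \geq 1/2 + \varepsilon$ because its Euler product converges absolutely for $\Re s > (1 - b_2)/b_1 \leq 0$, so shifting the contour from $\Re s = 1 + \varepsilon$ to $\Re s = 1/2 + \varepsilon$ extracts the residue at the simple pole of $\zeta(s)\,G(s)$ at $s = 1$, contributing $C_{\vb, r}\, x$, and the shifted contour produces exactly $O(x^{1/2 + \varepsilon})$.
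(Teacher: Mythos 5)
Your proof is correct, but it takes a genuinely different and more elementary route than the paper. Both arguments start from the same factorization $\sum_n f_{\vb}(n)^r n^{-s}=\zeta(s)G(s)$ with $G(1)=C_{\vb,r}$, but the paper then treats $G$ as a black box: it only uses $g_{\vb,r}(n)=O_\varepsilon(n^\varepsilon)$, bounds $\sum_{k\le y}g_{\vb,r}(k)=O_\varepsilon(y^\varepsilon)$ via Perron's formula and a contour shift, and finishes with the two-sided Dirichlet hyperbola splitting at $\sqrt{x}$, which is what produces the stated error $O(x^{1/2+\varepsilon})$. You instead compute the coefficients of $G$ explicitly — multiplicative, supported on $d=m^{b_1}$ with $m$ squarefree, with $|g(m^{b_1})|\le r^{\omega(m)}/m^{b_2}$ — and then a single divisor switch $\sum_{n\le x}f_{\vb}(n)^r=\sum_{d\le x}g(d)\lfloor x/d\rfloor$ suffices, with the tail and the term $\sum_{d\le x}|g(d)|$ bounded trivially thanks to the sparse support. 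This avoids complex analysis entirely and in fact yields a sharper error, $O_{\varepsilon,\vb,r}(x^{\varepsilon})$ (or $O_r(\log^r x)$ when $b_2=1$), which of course implies the lemma as stated; the paper's Perron-plus-hyperbola route is more robust when the coefficients of $G$ are not as explicitly controllable, but is wasteful here. One small caveat: your closing remark that one could instead shift the Perron integral of $\zeta(s)G(s)$ to $\Re s=1/2+\varepsilon$ glosses over the growth of $\zeta$ in the critical strip and the truncation error (a naive convexity bound does not immediately give $x^{1/2+\varepsilon}$); since this is only an aside and your main argument is complete, it does not affect the validity of the proof.
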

In our proofs, we use the following estimates several times. We state them here without a proof.
\begin{lemma}\label{lem: bounds for sum over 1<i<i'<n}
	We have
	$$
	\sum\limits_{1\leq i<i^{\prime}\leq n}i^{-1/2}=O(n^{3/2})\indent\text{and}
	\indent\sum\limits_{1\leq i<i^{\prime}\leq n}(i^{\prime}-i)^{-1/2}=O(n^{3/2})
	$$
	as $n\rightarrow\infty$.
\end{lemma}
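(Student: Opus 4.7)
The plan for both estimates is to reduce each two-dimensional sum to a one-dimensional tail of the form $\sum_{k\leq n} k^{-1/2}$, which by the elementary integral comparison $\int_1^n x^{-1/2}\,dx = 2(\sqrt{n}-1)$ is $O(\sqrt n)$. Multiplying by a trivial outer factor of $n$ will then yield the claimed $O(n^{3/2})$ bound in each case.

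For the first sum I would fix the summation variable $i$ and count the number of admissible $i'$: since $i < i' \leq n$ there are exactly $n-i \leq n$ choices. Pulling this factor out reduces the double sum to $n \sum_{i=1}^{n} i^{-1/2}$, and the one-dimensional sum is handled by the integral comparison noted above.

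For the second sum I would substitute $k = i' - i$ and swap the order of summation. For each fixed $k \in \{1, \ldots, n-1\}$, the number of pairs $(i,i')$ with $i' - i = k$ and $1 \leq i < i' \leq n$ equals $n - k \leq n$. Hence the sum is bounded by $n \sum_{k=1}^{n-1} k^{-1/2}$, which is once again $O(n^{3/2})$ by the same integral comparison.

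There is no genuine obstacle here; both bounds follow from a one-line reinterpretation of the summation order combined with a standard harmonic-type estimate. The only mild subtlety is to make sure that in the second sum the effective summation variable extracted is the difference $k = i' - i$ rather than either of $i$ or $i'$ individually, since otherwise one only sees a telescoping quantity and loses the $k^{-1/2}$ structure.
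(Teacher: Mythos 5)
Your argument is correct: bounding the inner count of $i'$ (respectively, of pairs with fixed difference $k=i'-i$) by $n$ and then using $\sum_{k\leq n}k^{-1/2}=O(\sqrt n)$ via integral comparison gives both bounds, and this is precisely the standard elementary estimate the paper has in mind — indeed the paper states this lemma without proof, so there is nothing to diverge from.
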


\section{Proof of Theorem \ref{thm: Theorem for multiple watchpoints}}

By Lemma \ref{lem: second moment method}, we only need to compute the expectation and variance of $\overline{S}(n)$.

\begin{proposition}\label{prop: (E(S(n))_a,W=}
If $b_1\leq b_2$, then for any $\varepsilon>0$ we have
\begin{align}
\mathbb{E}\big(\overline{S}(n)\big)=\prod\limits_{p}\bigg(1-\frac{J}{p^{b_1+b_2}}\bigg)+O_{\mathcal{W},\varepsilon}(n^{-1/2+\varepsilon}),
\end{align}
where $p$ runs over all primes.
\end{proposition}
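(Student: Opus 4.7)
The plan is to compute $\mathbb{E}(\overline{S}(n))$ by linearity and then match each individual probability to the structure of Lemma \ref{lem: sum with gcdb condition}. First I would write
$$
\mathbb{E}(\overline{S}(n)) = \frac{1}{n}\sum_{i=1}^{n} \mathbb{E}(X_i),
$$
and, since $P_i = (x_i, i - x_i)$ with $x_i \sim \mathrm{Binomial}(i, \alpha)$, decompose
$$
\mathbb{E}(X_i) = \sum_{k=0}^{i} \binom{i}{k} \alpha^k (1-\alpha)^{i-k} \prod_{j=1}^{J} \Id\big[(k, i-k) \text{ is } \vb\text{-visible from } (u_j, v_j)\big].
$$
The $O(J)$ values of $k$ for which $k = u_j$ or $i-k = v_j$ for some $j$ (the degenerate case of Remark \ref{remark-equal-coordinate}) each contribute a binomial weight of size $O(i^{-1/2})$, which is absorbed into the final error. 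For the remaining $k$, Lemma \ref{lem: criteria for b-visiblity} turns the visibility condition into $\bgcd(k - u_j,\, i - k - v_j) = 1$ for every $j$.

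Next I would massage the $\bgcd$ conditions into the exact shape required by Lemma \ref{lem: sum with gcdb condition}. Using Lemma \ref{lem: properties of bgcd}(ii) with $a = 1$ (which is where the hypothesis $b_1 \leq b_2$ is essential),
$$
\bgcd(k - u_j,\, i - k - v_j) \;=\; \bgcd\big(i - u_j - v_j,\, i - k - v_j\big),
$$
so the first argument becomes independent of $k$. Substituting $k' = i - k$ recasts the binomial sum as
$$
\sum_{k'=0}^{i} \binom{i}{k'} (1-\alpha)^{k'} \alpha^{i-k'} \prod_{j} \Id\big[\bgcd(i - u_j - v_j,\, k' - v_j) = 1\big],
$$
which fits Lemma \ref{lem: sum with gcdb condition} with $m = n = i$, parameters $s_j = u_j + v_j$ and $t_j = v_j$, and walk probability $1-\alpha$. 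The required pairwise condition $\bgcd(s_{j_1} - s_{j_2},\, t_{j_1} - t_{j_2}) = 1$ follows from $(*)$: by Lemma \ref{lem: properties of bgcd}(ii) and Lemma \ref{lem: criteria for b-visiblity},
$$
\bgcd\big((u_{j_1}-u_{j_2}) + (v_{j_1}-v_{j_2}),\, v_{j_1}-v_{j_2}\big) \;=\; \bgcd(u_{j_1}-u_{j_2},\, v_{j_1}-v_{j_2}) \;=\; 1.
$$

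Finally, Lemma \ref{lem: sum with gcdb condition} (applicable once $i$ exceeds a constant depending only on $\mathcal{W}$; the finitely many smaller $i$ produce an $O_{\mathcal{W}}(1/n)$ contribution after division) yields
$$
\mathbb{E}(X_i) \;=\; f_{\vb,\, \mathbf{s}}(i) \;+\; O_{\alpha, J, \varepsilon}\big(i^{-1/2+\varepsilon}\big)
$$
with $\mathbf{s} = (u_1+v_1, \ldots, u_J+v_J)$. Averaging over $i \leq n$ and invoking the mean value estimate in Lemma \ref{lem: mean of f_bu}, which supplies $\sum_{i\leq n} f_{\vb, \mathbf{s}}(i) = n\prod_p(1 - J/p^{b_1+b_2}) + O_{\vb,\mathbf{s}}(\log^J n)$, delivers
$$
\mathbb{E}\big(\overline{S}(n)\big) \;=\; \prod_{p}\Big(1 - \frac{J}{p^{b_1+b_2}}\Big) \;+\; O_{\mathcal{W}, \varepsilon}\big(n^{-1/2+\varepsilon}\big),
$$
as desired. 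The main subtlety I anticipate is the bookkeeping of coordinates: Lemma \ref{lem: sum with gcdb condition} forces the $b_2$-slot to be the one containing the summation variable, and my argument reaches that configuration via the $a=1$ shift (which needs $b_1\leq b_2$) followed by the involution $k' = i-k$; getting these two moves in the right order, and verifying that condition $(*)$ survives the accompanying change of parameters, is the only genuinely non-mechanical step.
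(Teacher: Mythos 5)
Your proof is correct and follows essentially the same route as the paper's: linearity of expectation, conversion of visibility into $\bgcd$ conditions via Lemma \ref{lem: criteria for b-visiblity}, the shift from Lemma \ref{lem: properties of bgcd}(ii) (where $b_1\leq b_2$ is used), then Lemma \ref{lem: sum with gcdb condition} with ${\bf s}={\bf u}+{\bf v}$ followed by the mean value Lemma \ref{lem: mean of f_bu}. Your explicit treatment of the degenerate coordinates (bounding the $O(J)$ exceptional binomial weights by $O_{\alpha}(i^{-1/2})$) and the substitution $k'=i-k$ with parameter $1-\alpha$ to match the lemma's format are only slightly more careful bookkeeping than the paper's appeal to Remark \ref{remark-equal-coordinate} and its direct application of the lemma.
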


\begin{proof}
By the definition of $X_i$, we have
$$
\mathbb{E}\big(\overline{S}(n)\big)=\frac{1}{n}\sum_{1\leq i\leq n} \mathbb{E}\big(X_i\big)=\sum_{1\leq i\leq n} \mathbb{P}\big(P_i~\text{is {\bf b}-visible from~ } \mathcal{W}\big).
$$
Let $n_0>0$ be a constant larger than $\max\limits_{1\leq i\leq J}(|u_j|+1)$ and $\max\limits_{1\leq i\leq J}(|v_j|+1)$, then
\begin{align}\label{eq: E(S(n))=}
\mathbb{E}\big(\overline{S}(n)\big)=\frac{1}{n}\sum_{n_0<i\leq n} \mathbb{P}\big(P_i~\text{is {\bf b}-visible from~}\mathcal{W}\big)+O_{\mathcal{W}}(n^{-1}).
\end{align}
For simplicity, we denote
\begin{align}\label{eq: definition of P_1^i}
\mathbb{P}_i^1:=\mathbb{P}(P_i~\text{is {\bf b}-visible from~}\mathcal{W})
\end{align}
For an $\alpha$-random walker, the coordinate of $P_i$ at the $i$-th step can be written as $P_i=(k,i-k)$ for some $k=0,1,\cdots,i$. The probability that $P_i$ is of the form $(k, i-k)$ is $\binom{i}{k}
\alpha^k (1-\alpha)^{i-k}$, which implies
\begin{align}\label{eq: one-point-probability}
\mathbb{P}_i^1:=\sum\limits_{\substack{0\leq k\leq i\\ (k,i-k)~\text{is {\bf b}-visible from}~\mathcal{W}}}{i \choose k}
\alpha^k(1-\alpha)^{i-k}.
\end{align}
By Remark \ref{remark-equal-coordinate} in Section 2, we see that for $i>n_0$, points $P_i=(k,i-k),~0\leq k\leq i$ are either not $\bf b$-visible from $\mathcal{W}$ or $k\neq u_j$ and $i-k\neq v_j$ for $1\leq j\leq J$. It then follows from Lemma \ref{lem: criteria for b-visiblity} that
$$
\mathbb{P}_i^1=\sum\limits_{\substack{0\leq k\leq i\\ \bgcd(k-u_j,i-k-v_j)=1,1\leq j\leq J}}{i \choose k}
\alpha^k(1-\alpha)^{i-k}
$$
By (ii) of Lemma \ref{lem: properties of bgcd}, we have
$$
\mathbb{P}_i^1=\sum\limits_{\substack{0\leq k\leq i\\ \bgcd(i-u_j-v_j,i-k-v_j)=1,1\leq j\leq J}}{i \choose k}
\alpha^k(1-\alpha)^{i-k}
$$
Note that distinct lattice points in $\mathcal{W}$ are pairwise ${\bf b}$-visible from each other, then by Lemma \ref{lem: criteria for b-visiblity} and (ii) of Lemma \ref{lem: properties of bgcd}, there holds
$$
\bgcd(u_{j_1}+v_{j_1}-u_{j_2}-v_{j_2},v_{j_1}-v_{j_2})=\bgcd(u_{j_1}-u_{j_2},v_{j_1}-v_{j_2})=1
$$ 
for any $1\leq j_1\neq j_2\leq J$. Thus by Lemma \ref{lem: sum with gcdb condition}, for $n_0<i\leq n$ and any $\varepsilon>0$, there holds
\begin{align}\label{eq: sum_k i choose k=}
\mathbb{P}_i^1=f_{{\bf b},{\bf u}+{\bf v}}(i)+O_{\alpha,\mathcal{W},\varepsilon}(i^{-1/2+\varepsilon}),
\end{align}
where $f_{{\bf b},{\bf u}+{\bf v}}$ is given by \eqref{eq: expression of f_bu(n)} with ${\bf s}={\bf u}+{\bf v}$. Combining this with \eqref{eq: E(S(n))=} and \eqref{eq: definition of P_1^i}, we obtain
$$
\mathbb{E}\big(\overline{S}(n)\big)=\frac{1}{n}\sum\limits_{n_0\leq i\leq n}f_{{\bf b},{\bf u}+{\bf v}}(i)+O_{\alpha,\mathcal{W},\varepsilon}\Big(\frac{1}{n}\sum\limits_{n_0\leq i\leq n}i^{-1/2+\varepsilon}\Big)+O_{\mathcal{W}}(n^{-1}).
$$
Completing the sum over $i$ in the first term up to an error term bounded by $O_{\mathcal{W}}(n^{-1})$ and using Lemma \ref{lem: bounds for sum over 1<i<i'<n} to estimate the second term, we obtain
\begin{align}\label{eq: (E(S(n))_a,W=(1)}
\mathbb{E}\big(\overline{S}(n)\big)=\frac{1}{n}\sum\limits_{1\leq i\leq n}f_{{\bf b},{\bf u}+{\bf v}}(i)+O_{\alpha,\mathcal{W},\varepsilon}(n^{-1/2+\varepsilon}).
\end{align}
Then our desired result follows from Lemma \ref{lem: mean of f_bu}.
\end{proof}

Now we estimate the variance of $\overline{S}(n)$.
\begin{proposition}\label{prop: V(S(n)_a,W)=}
If $b_1\leq b_2$, then for any $\varepsilon>0$ we have
\begin{align}
\mathbb{V}\big(\overline{S}(n)\big)=O(n^{-1/2+\varepsilon}).
\end{align}
\end{proposition}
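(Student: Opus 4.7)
The plan is to expand $\mathbb{V}(\overline S(n))=\mathbb{E}(\overline S(n)^2)-\mathbb{E}(\overline S(n))^2$ and to show that, after the diagonal terms $i=i'$ contribute $O(1/n)$, the off-diagonal joint probabilities $\mathbb{E}(X_iX_{i'})$ for $i<i'$ essentially factor as $f_{\vb,\mathbf{u}+\mathbf{v}}(i)f_{\vb,\mathbf{u}+\mathbf{v}}(i')$ up to an acceptable error, which then cancels against $\mathbb{E}(\overline S(n))^2$ computed via Proposition \ref{prop: (E(S(n))_a,W=}.

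First I would drop the $O(1/n)$ contribution from $i\leq n_0$ and from the diagonal $i=i'$, and reduce to estimating
$$\mathbb{E}(X_iX_{i'})=\mathbb{P}(P_i\text{ and }P_{i'}\text{ are both $\vb$-visible from }\mathcal{W})$$
for $n_0<i<i'\leq n$. Since the increment $P_{i'}-P_i$ is independent of $P_i$ and has the same Bernoulli law of length $i'-i$, conditioning on $P_i=(k,i-k)$ yields
$$\mathbb{E}(X_iX_{i'})=\sum_{\substack{0\le k\le i\\(k,i-k)~\vb\text{-vis from }\mathcal{W}}}\binom{i}{k}\alpha^k(1-\alpha)^{i-k}\sum_{\substack{0\le k''\le i'-i\\(k+k'',i'-k-k'')~\vb\text{-vis}}}\binom{i'-i}{k''}\alpha^{k''}(1-\alpha)^{i'-i-k''}.$$

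Next, for the inner sum I would translate the visibility condition via Lemma \ref{lem: criteria for b-visiblity} and Lemma \ref{lem: properties of bgcd}(ii): writing $\bgcd(k+k''-u_j,\,i'-k-k''-v_j)=\bgcd(i'-u_j-v_j,\,i'-k-v_j-k'')$, the inner sum matches the form of Lemma \ref{lem: sum with gcdb condition} with $n=i'$, $s_j=u_j+v_j$ and $t_j=i'-v_j-k$. The pairwise condition $\bgcd(s_{j_1}-s_{j_2},t_{j_1}-t_{j_2})=\bgcd(u_{j_1}-u_{j_2}+v_{j_1}-v_{j_2},\,v_{j_2}-v_{j_1})=\bgcd(u_{j_1}-u_{j_2},v_{j_1}-v_{j_2})=1$ follows from hypothesis $(*)$ together with Lemma \ref{lem: properties of bgcd}(ii). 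The crucial observation is that the main term $f_{\vb,\mathbf{u}+\mathbf{v}}(i')$ produced by Lemma \ref{lem: sum with gcdb condition} is independent of the conditioning $k$, so after summing out the outer variable I obtain
$$\mathbb{E}(X_iX_{i'})=\mathbb{P}_i^1\,f_{\vb,\mathbf{u}+\mathbf{v}}(i')+O_{\alpha,\mathcal W,\varepsilon}\!\bigl(i'^{\,\varepsilon}(i'-i)^{-1/2}\bigr).$$
Combining with \eqref{eq: sum_k i choose k=} for $\mathbb{P}_i^1$ and using $f_{\vb,\mathbf{u}+\mathbf{v}}\le 1$ gives
$$\mathbb{E}(X_iX_{i'})=f_{\vb,\mathbf{u}+\mathbf{v}}(i)f_{\vb,\mathbf{u}+\mathbf{v}}(i')+O\bigl(i^{-1/2+\varepsilon}+i'^{\,\varepsilon}(i'-i)^{-1/2}\bigr).$$

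Finally, I would assemble $\mathbb{E}(\overline S(n)^2)=\frac{1}{n^2}\sum_{i,i'}\mathbb{E}(X_iX_{i'})$, use Lemma \ref{lem: bounds for sum over 1<i<i'<n} to bound $\frac{1}{n^2}\sum_{i<i'}\bigl(i^{-1/2+\varepsilon}+i'^{\,\varepsilon}(i'-i)^{-1/2}\bigr)=O(n^{-1/2+\varepsilon})$, and note that the main quadratic term $\frac{1}{n^2}\bigl(\sum_{i\le n}f_{\vb,\mathbf{u}+\mathbf{v}}(i)\bigr)^2$ equals $\mathbb{E}(\overline S(n))^2$ up to $O(n^{-1+\varepsilon})$ by Lemma \ref{lem: mean of f_bu} and \eqref{eq: (E(S(n))_a,W=(1)}. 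The two main terms cancel, leaving $\mathbb{V}(\overline S(n))=O(n^{-1/2+\varepsilon})$. The main obstacle is executing the inner Lemma \ref{lem: sum with gcdb condition} application cleanly, since it requires carefully shifting the $\bgcd$ arguments so that the main term loses its dependence on the conditioning parameter $k$; once that is done, the remainder of the argument is bookkeeping with Lemma \ref{lem: bounds for sum over 1<i<i'<n}.
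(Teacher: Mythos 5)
Your proposal is correct and follows essentially the same route as the paper's proof: condition on the independent increment, apply Lemma \ref{lem: sum with gcdb condition} first to the inner binomial sum (with $s_j=u_j+v_j$, the $k$-independent main term $f_{\vb,\mathbf{u}+\mathbf{v}}(i')$, and the pairwise check via Lemma \ref{lem: properties of bgcd}(ii) and condition $(*)$) and then to the outer sum, and finally cancel the quadratic main term against $\mathbb{E}(\overline{S}(n))^2$ using Lemma \ref{lem: bounds for sum over 1<i<i'<n}. The only small repair is your claim $f_{\vb,\mathbf{u}+\mathbf{v}}\leq 1$, which is guaranteed only in the $J=1$, ${\bf s}=0$ case; replacing it by the paper's bound $|f_{{\bf b},{\bf s}}(n)|=O_{\varepsilon}(n^{\varepsilon})$ from \eqref{eq: bound of f_bs} changes the cross error to $O(i'^{\varepsilon}i^{-1/2+\varepsilon})$ and still gives $\mathbb{V}(\overline{S}(n))=O(n^{-1/2+\varepsilon})$.
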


\begin{proof}
Write the variance
\begin{align}\label{eq: expression of V(S(n)_a,W)}
\mathbb{V}(\overline{S}(n))=\mathbb{E}\big(\overline{S}(n)^2\big)-\mathbb{E}\big(\overline{S}(n)\big)^2.
\end{align}
It follows from \eqref{eq: (E(S(n))_a,W=(1)} that
\begin{align}\label{eq: E(S(n)_a,W)^2=}
\mathbb{E}\big(\overline{S}(n)\big)^2=\frac{1}{n^2}\bigg(\sum\limits_{1\leq i\leq n}f_{{\bf b},{\bf u}+{\bf v}}(i)\bigg)^2+O_{\mathcal{W},\varepsilon}(n^{-1/2+\varepsilon}).
\end{align}
To deal with $\mathbb{E}\big(\overline{S}(n)^2\big)$, we expand the square and write
\begin{align}\label{eq: expression of E(S(n)_a,W)^2}
\mathbb{E}\big(\overline{S}(n)^2\big)=\frac{2}{n^2}\sum\limits_{1\leq i<i^{\prime}\leq n}\mathbb{E}(X_iX_{i^{\prime}})+{\frac{1}{n^2}}\sum\limits_{1\leq i\leq n}\mathbb{E}(X_i^2).
\end{align}

For $i<i^{\prime}$, we have
$$
\mathbb{E}(X_iX_{i^{\prime}})=\mathbb{P}(P_i~\text{and}~P_{i^{\prime}}~\text{are both {\bf b}-visible to}~\mathcal{W})=:\mathbb{P}_{i,i^{\prime}}^2,
$$
say. Let $n_0>0$ be a constant larger than $\max\limits_{1\leq i\leq J}(|u_j|+1)$ and $\max\limits_{1\leq i\leq J}(|v_j|+1)$, then we have
\begin{align}\label{eq: sum of E(X_iX_i')=}
\sum\limits_{1\leq i<i^{\prime}\leq n}\mathbb{E}(X_i X_{i^{\prime}})=\sum\limits_{n_0\leq i<i^{\prime}\leq n}\mathbb{P}_{i,i^{\prime}}^2+O_{\mathcal{W}}(n)
\end{align}
By a similar argument as that before \eqref{eq: one-point-probability}, we see that the probability such that $P_i=(k,i-k)$ and $P_{i^{\prime}}=(k+l,i^{\prime}-k-l)$ for some $0\leq k\leq i$ and $0\leq l\leq i^{\prime}-i$ is (here it means step $i^{\prime}$ depends on step $i$)
$$
{i\choose k} \alpha^k(1-\alpha)^{i-k}{{i^{\prime}-i}\choose l} \alpha^{l}(1-\alpha)^{i^{\prime}-i-l}.
$$
It follows from the above and remark \ref{remark-equal-coordinate} in Section 2 that 
\begin{align*}
\mathbb{P}_{i,i^{\prime}}^2=\sum\limits_{\substack{0\leq k\leq i\\ \bgcd(k-u_j,i-k-v_j)=1\\1\leq j\leq J}}{i\choose k} \alpha^k(1-\alpha)^{i-k}\sum\limits_{\substack{0\leq l\leq i^{\prime}-i\\ \bgcd(k+l-u_j,i^{\prime}-k-l-v_j)=1\\1\leq j\leq J}}{{i^{\prime}-i}\choose l} \alpha^{l}(1-\alpha)^{i^{\prime}-i-l}
\end{align*}
for $n_0<i<i^{\prime}$. By (ii) of Lemma \ref{lem: properties of bgcd}, the inner sum over $l$ is equal to
$$
\sum\limits_{\substack{0\leq l\leq i^{\prime}-i\\\bgcd(i^{\prime}-u_j-v_j,i^{\prime}-l-k-v_j)=1\\1\leq j\leq J}}{{i^{\prime}-i}\choose l} \alpha^{l}(1-\alpha)^{i^{\prime}-i-l}.
$$
Applying Lemma \ref{lem: mean of f_bu} with $s_j=u_j+v_j$ and $t_j=l-i+k+v_j$, $1\leq j\leq J$, we then have
\begin{align*}
\mathbb{P}_{i,i^{\prime}}^2=\sum\limits_{\substack{0\leq k\leq i\\ \bgcd(k-u_j,i-k-v_j)=1\\1\leq j\leq J}}{i\choose k} \alpha^k(1-\alpha)^{i-k}\Big(f_{{\bf b},{\bf u}+{\bf v}}(i^{\prime})+O_{\alpha,\mathcal{W},\varepsilon}\big(({i^{\prime}}-i)^{-1/2+\varepsilon}\big)\Big)
\end{align*}
for $n_0<i<i^{\prime}$. By (ii) of Lemma \ref{lem: properties of bgcd} and using the binomial theorem to bound the error term, we obtain
\begin{align*}
\mathbb{P}_{i,i^{\prime}}^2=f_{{\bf b},{\bf u}+{\bf v}}(i^{\prime})\sum\limits_{\substack{0\leq k\leq i\\ \bgcd(i-u_j-v_j,i-k-v_j)=1\\1\leq j\leq J}}{i\choose k} \alpha^k(1-\alpha)^{i-k}+O_{\alpha,\mathcal{W},\varepsilon}\big(({i^{\prime}}-i)^{-1/2+\varepsilon}\big)
\end{align*}
for $n_0<i<i^{\prime}$ and any $\varepsilon>0$. This combining Lemma \ref{lem: sum with gcdb condition} yields
\begin{align*}
\mathbb{P}_{i,i^{\prime}}^2=f_{{\bf b},{\bf u}+{\bf v}}(i^{\prime})\big(f_{{\bf b},{\bf u}+{\bf v}}(i)+O_{\alpha,\mathcal{W},\varepsilon}(i^{-1/2+\varepsilon})\big)+O_{\alpha,\mathcal{W},\varepsilon}\big(({i^{\prime}}-i)^{-1/2+\varepsilon}\big),
\end{align*}
which gives
\begin{align}\label{eq: P_2=}
\mathbb{P}_{i,i^{\prime}}^2=f_{{\bf b},{\bf u}+{\bf v}}(i^{\prime})f_{{\bf b},{\bf u}+{\bf v}}(i)+O_{\alpha,\mathcal{W},\varepsilon}({i^{\prime}}^{\varepsilon}i^{-1/2+\varepsilon}+({i^{\prime}}-i)^{-1/2+\varepsilon}\big)
\end{align}
for $n_0<i<i^{\prime}$, where we have used the bound 
\begin{align}\label{eq: bound of f_bs}
|f_{{\bf b},{\bf s}}(n)|\leq \sum_{d_j^{b_1}\mid n-s_j,1\leq j\leq J}\frac{1}{d_1\cdots d_J}=O_{\alpha,\mathcal{W},\varepsilon}(n^{\varepsilon})
\end{align}
for any $\varepsilon>0$ and $n>\max\limits_{1\leq j\leq J}(|s_j|+1)$. It follows from \eqref{eq: sum of E(X_iX_i')=} and \eqref{eq: P_2=} that
$$
\sum\limits_{n_0\leq i<i^{\prime}\leq n}\mathbb{P}_{i,i^{\prime}}^2=\sum\limits_{n_0\leq i<i^{\prime}\leq n}f_{{\bf b},{\bf u}}(i)f_{{\bf b},{\bf u}}(i^{\prime})+O\bigg(n^{\varepsilon}\sum\limits_{n_0\leq i<i^{\prime}\leq n}\Big(i^{-1/2}+(i^{\prime}-i)^{-1/2}\Big)\bigg).
$$
Estimating the $O$-term by Lemma \ref{lem: bounds for sum over 1<i<i'<n}, we obtain
$$
\sum\limits_{n_0\leq i<i^{\prime}\leq n}\mathbb{P}_{i,i^{\prime}}^2=\sum\limits_{n_0<i<i^{\prime}\leq n}f_{{\bf b},{\bf u}}(i)f_{{\bf b},{\bf u}}(i^{\prime})+O(n^{3/2+\varepsilon}).
$$
Complete the sum over $i$ and $i^{\prime}$ up to an error term
$$
\sum\limits_{i\leq n_0}\sum\limits_{i<i^{\prime}\leq n}f_{{\bf b},{\bf u}}(i)f_{{\bf b},{\bf u}}(i^{\prime})=O(n^{1+\varepsilon}),
$$
where we have used bound \eqref{eq: bound of f_bs}, then we derive
$$
\sum\limits_{n_0<i<i^{\prime}\leq n}\mathbb{P}_{i,i^{\prime}}^2=\sum\limits_{1\leq i<i^{\prime}\leq n}f_{{\bf b},{\bf u}}(i)f_{{\bf b},{\bf u}}(i^{\prime})+O(n^{3/2+\varepsilon}).
$$
Adding diagonal term up to an error term which is $O(n^{\varepsilon})$, we have
$$
\sum\limits_{n_0<i<i^{\prime}\leq n}\mathbb{P}_{i,i^{\prime}}^2=\frac{1}{2}\bigg(\sum\limits_{1\leq i\leq n}f_{{\bf b},{\bf u}}(i)\bigg)^2+O(n^{3/2+\varepsilon}).
$$
Inserting this into \eqref{eq: sum of E(X_iX_i')=}, we obtain
\begin{align}\label{eq: sum_iE(X(W)_iX(W)_i')=}
\sum\limits_{1\leq i<i^{\prime}\leq n}\mathbb{E}(X_i X_{i^{\prime}})=\frac{1}{2}\bigg(\sum\limits_{1\leq i\leq n}f_{{\bf b},{\bf u}}(i)\bigg)^2+O(n^{3/2+\varepsilon}).
\end{align}
By the definition of $X_i$, we have
\begin{align}\label{eq: sum_iE(X(W))i^2=}
\sum\limits_{1\leq i\leq n}\mathbb{E}\big(X_i^2\big)=\sum\limits_{1\leq i\leq n}\mathbb{E}(X_i)=O(n).
\end{align}
Combining \eqref{eq: sum_iE(X(W)_iX(W)_i')=}, \eqref{eq: sum_iE(X(W))i^2=} with \eqref{eq: expression of E(S(n)_a,W)^2} gives
$$
\mathbb{E}\big(\overline{S}(n)^2\big)=\frac{1}{n^2}\bigg(\sum\limits_{1\leq i\leq n}f_{{\bf b},{\bf u}+{\bf v}}(i)\bigg)^2+O(n^{-1/2+\varepsilon}).
$$
Inserting this and \eqref{eq: E(S(n)_a,W)^2=} into \eqref{eq: expression of V(S(n)_a,W)} yields our desired result.
\end{proof}

Now Theorem \ref{thm: Theorem for multiple watchpoints} follows from Propositions \ref{prop: (E(S(n))_a,W=}, \ref{prop: V(S(n)_a,W)=} and Lemma \ref{lem: second moment method}.

\section{Proof of Theorem \ref{thm: Theorem for multiple walkers}}
Similar as in the proof of Theorem \ref{thm: Theorem for multiple watchpoints}, we  compute the expectation and variance of $\overline{R}(n)$. As pointed out in Remark \ref{remark-b1-b2}, we only give the proof for the case $b_1\leq b_2$. 
\begin{proposition}\label{prop: E(R(n)_a)=}
If $b_1\leq b_2$, then for any $\varepsilon>0$ we have
\begin{align}
\mathbb{E}\big(\overline{R}(n)\big)=C_{{\bf b},r}+O(n^{-1/2+\varepsilon}),
\end{align}
as $n\rightarrow\infty$, where $C_{{\bf b},r}$ is defined in Lemma \ref{lem: mean of f_b^r(n)}.
\end{proposition}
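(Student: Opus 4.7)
The plan is to mirror the single-walker expectation computation of Proposition \ref{prop: (E(S(n))_a,W=}, exploiting crucially the mutual independence of the $r$ random walks. By linearity of expectation,
\[
\mathbb{E}\big(\overline{R}(n)\big) = \frac{1}{n}\sum_{i=1}^{n}\mathbb{P}(Y_i = 1),
\]
and independence of the $r$ walks allows the joint visibility probability at step $i$ to factor as
\[
\mathbb{P}(Y_i = 1) = \prod_{j=1}^{r} q_j(i), \qquad q_j(i) := \mathbb{P}\big(P_i^{(j)} \text{ is } \vb\text{-visible from the origin}\big).
\]
This is the key structural point that reduces the multi-walker question to an $r$-th moment problem for a single multiplicative function.

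Next I would estimate each single-walker factor $q_j(i)$. The $\alpha_j$-walker is at $(k, i-k)$ with probability $\binom{i}{k}\alpha_j^{k}(1-\alpha_j)^{i-k}$, and by Lemma \ref{lem: criteria for b-visiblity} together with Remark \ref{remark-equal-coordinate}, for $i \geq 2$ the lattice point $(k, i-k)$ with $1 \leq k \leq i-1$ is $\vb$-visible from the origin exactly when $\bgcd(k, i-k) = 1$; the two boundary contributions from $k \in \{0, i\}$ total $\alpha_j^i + (1-\alpha_j)^i$, which is exponentially small and is absorbed into the error term. Using (ii) of Lemma \ref{lem: properties of bgcd} to rewrite $\bgcd(k, i-k) = \bgcd(i, i-k)$, and applying Lemma \ref{lem: sum with gcdb condition} with $J=1$, $n = i$, $s_1 = 0$ (and any convenient choice of $t_1$), yields for any $\varepsilon > 0$
\[
q_j(i) = f_{\vb}(i) + O_{\alpha_j, \varepsilon}\big(i^{-1/2 + \varepsilon}\big).
\]

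Since $0 < f_{\vb}(i) \leq 1$, expanding the $r$-fold product and using the trivial bound on cross terms gives
\[
\mathbb{P}(Y_i = 1) = f_{\vb}(i)^{r} + O_{\boldsymbol{\alpha}, r, \varepsilon}\big(i^{-1/2 + \varepsilon}\big).
\]
Summing over $1 \leq i \leq n$ and invoking Lemma \ref{lem: mean of f_b^r(n)} for the main term, together with the elementary bound $\sum_{i \leq n} i^{-1/2 + \varepsilon} = O(n^{1/2 + \varepsilon})$ for the error, produces
\[
\mathbb{E}\big(\overline{R}(n)\big) = \frac{1}{n}\sum_{i=1}^{n} f_{\vb}(i)^{r} + O\big(n^{-1/2 + \varepsilon}\big) = C_{\vb, r} + O\big(n^{-1/2 + \varepsilon}\big),
\]
which is exactly the claim.

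Conceptually, the whole argument is short because independence collapses the multi-walker calculation into an $r$-th moment of $f_{\vb}$; the real work has been pushed into Lemma \ref{lem: mean of f_b^r(n)}, whose Euler product with the factor $(1 - p^{-b_2})^{r}$ is precisely what produces the limiting constant $C_{\vb, r}$. The only delicate point in the present proof is the reduction of single-walker visibility to the hypothesis of Lemma \ref{lem: sum with gcdb condition}, i.e. handling the two degenerate coordinates at $k = 0$ and $k = i$ via Remark \ref{remark-equal-coordinate}; everything else is routine and parallels Proposition \ref{prop: (E(S(n))_a,W=} with $\mathcal{W} = \{(0,0)\}$.
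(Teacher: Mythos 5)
Your proof is correct and follows essentially the same route as the paper: factor $\mathbb{P}(Y_i=1)$ by independence of the $r$ walkers, reduce each single-walker factor to $f_{\vb}(i)+O(i^{-1/2+\varepsilon})$ via Lemma \ref{lem: properties of bgcd}(ii) and Lemma \ref{lem: sum with gcdb condition} with $J=1$, expand the product using $0<f_{\vb}\leq 1$, and conclude with Lemma \ref{lem: mean of f_b^r(n)}. Your explicit treatment of the degenerate coordinates $k\in\{0,i\}$ (exponentially small contribution) is in fact slightly more careful than the paper's, which passes over this point silently.
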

\begin{proof}
We write
$$
\mathbb{E}\big(\overline{R}(n)\big)=\frac{1}{n}\sum_{1\leq i\leq n} \mathbb{E}\big(Y_i\big)=\frac{1}{n}\sum_{1\leq i\leq n} \mathbb{P}\big(\text{all $P^{(j)}_i,~1\leq j\leq r$ are $\mathbf{b}$-visible}\big).
$$
For simplicity, we denote
\begin{align}\label{eq: definition of P_3}
\mathbb{P}_i^3:=\mathbb{P}\big(\text{all $P^{(j)}_i,~1\leq j\leq r$ are  $\mathbf{b}$-visible}\big).
\end{align}
Then we have
\begin{align}\label{eq: expression of E(R(n)_a)}
\mathbb{E}\big(\overline{R}(n)\big)=\frac{1}{n}\sum_{1< i\leq n}\mathbb{P}_i^3+O(n^{-1}).
\end{align}
and
$$
\mathbb{P}_i^3=\prod\limits_{1\leq j\leq r}\sum\limits_{\substack{0\leq k\leq i\\ \bgcd(k,i-k)=1}}{i \choose k}
\alpha_j^k(1-\alpha_j)^{i-k}
$$
for $i>1$. Applying Lemma \ref{lem: sum with gcdb condition} with $J=1$ and $(u_1,v_1)=(0,0)$, we obtain
$$
\mathbb{P}_i^3=\prod\limits_{1\leq j\leq r}\Big(f_{\bf b}(i)+O_{\alpha_j}\Big(i^{-1/2}\sum\limits_{d^{b_2}\mid i}1\Big)\Big)
$$
for $i>1$. Using the estimate $0<f_{\bf b}(n)<1$ and the bound $\sum\limits_{d^{b_2}\mid i}1\leq \tau(n)=O_{\varepsilon}(n^{\varepsilon})$ for $n\in\mathbb{N}$, we expand the product and derive
$$
\mathbb{P}_i^3=f_{\bf b}^r(i)+O_{{\boldsymbol{\alpha}},r}(i^{-1/2+\varepsilon})
$$
for $i>1$. Inserting this into \eqref{eq: expression of E(R(n)_a)} and applying Lemma \ref{lem: bounds for sum over 1<i<i'<n} to estimate the error term yield
\begin{align}\label{eq: E(R(n)_a)=(1)}
\mathbb{E}\big(\overline{R}(n)\big)=\frac{1}{n}\sum\limits_{1\leq i\leq n}f_{\bf b}(i)^r+O(n^{-1/2+\varepsilon}),
\end{align}
which implies our desired result together with Lemma \ref{lem: mean of f_b^r(n)}.
\end{proof}

Now we estimate the variance of $\overline{R}(n)$.
\begin{proposition}\label{prop: V(R(n)_a)=}Suppose $b_1\leq b_2$, then we have
\begin{align}
\mathbb{V}\big(\overline{R}(n)\big)=O(n^{-1/2+\varepsilon}).
\end{align}
\end{proposition}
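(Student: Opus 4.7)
The plan is to mirror the strategy of Proposition \ref{prop: V(S(n)_a,W)=}: write $\mathbb{V}(\overline{R}(n)) = \mathbb{E}(\overline{R}(n)^2) - \mathbb{E}(\overline{R}(n))^2$, square the asymptotic from \eqref{eq: E(R(n)_a)=(1)} to get
$$\mathbb{E}(\overline{R}(n))^2 = \frac{1}{n^2}\bigg(\sum_{i\leq n}f_{\bf b}(i)^r\bigg)^2 + O(n^{-1/2+\varepsilon}),$$
and then expand
$$\mathbb{E}(\overline{R}(n)^2) = \frac{2}{n^2}\sum_{1\leq i<i'\leq n}\mathbb{E}(Y_i Y_{i'}) + \frac{1}{n^2}\sum_{1\leq i\leq n}\mathbb{E}(Y_i^2),$$
where the diagonal piece is trivially $O(n^{-1})$ since $Y_i\in\{0,1\}$. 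The whole problem thus reduces to matching the off-diagonal sum with $\tfrac12(\sum_{i\leq n}f_{\bf b}(i)^r)^2$ up to $O(n^{3/2+\varepsilon})$.

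The key point is that the $r$ random walkers are mutually independent, so $\mathbb{E}(Y_i Y_{i'})$ factors as a product over $j=1,\dots,r$ of the probability that the single $\alpha_j$-walker is $\vb$-visible at both steps $i$ and $i'$. For each fixed $j$, that single-walker joint probability is exactly the quantity $\mathbb{P}_{i,i'}^2$ analyzed in Proposition \ref{prop: V(S(n)_a,W)=} in the special case $J=1$, $\mathcal{W}=\{(0,0)\}$. Therefore, by \eqref{eq: P_2=}, for $1<i<i'$ we have
$$\prod_{j=1}^r\Bigl(f_{\bf b}(i')f_{\bf b}(i)+O_{\alpha_j,\varepsilon}\bigl({i'}^{\varepsilon}i^{-1/2+\varepsilon}+(i'-i)^{-1/2+\varepsilon}\bigr)\Bigr).$$
Since $0<f_{\bf b}(n)\leq 1$, expanding this product term by term gives
$$\mathbb{E}(Y_i Y_{i'}) = f_{\bf b}(i)^r f_{\bf b}(i')^r + O_{{\boldsymbol\alpha},r,\varepsilon}\bigl(n^{\varepsilon}(i^{-1/2}+(i'-i)^{-1/2})\bigr).$$

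Summing over $1\leq i<i'\leq n$, the error term is controlled by Lemma \ref{lem: bounds for sum over 1<i<i'<n} and contributes $O(n^{3/2+\varepsilon})$. For the main term, completing a diagonal at cost $\sum_{i\leq n}f_{\bf b}(i)^{2r} = O(n)$ gives
$$\sum_{1\leq i<i'\leq n}f_{\bf b}(i)^r f_{\bf b}(i')^r = \tfrac{1}{2}\bigg(\sum_{i\leq n}f_{\bf b}(i)^r\bigg)^{\!2}+O(n),$$
so that $\mathbb{E}(\overline{R}(n)^2)$ coincides with $\mathbb{E}(\overline{R}(n))^2$ up to an error of size $O(n^{-1/2+\varepsilon})$, yielding the variance bound.

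I expect the only real work is bookkeeping the expansion of the $r$-fold product: one must verify that no error term gets multiplied by something growing in $n$, and that the factor $n^{\varepsilon}$ absorbs all the $\alpha_j$-dependent constants (which is fine because $r$ is fixed and the error factors are all $O(1)$). The handling of indices $i\leq 1$ is a single isolated case and is absorbed into $O(n^{-1})$. No genuinely new input is needed beyond the single-walker analysis of Proposition \ref{prop: V(S(n)_a,W)=} plus independence of the walkers; the multiplicativity of the error expansion is the only place to be careful.
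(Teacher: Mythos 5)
Your proposal is correct and follows essentially the same route as the paper: the same variance decomposition, factoring $\mathbb{E}(Y_iY_{i'})$ over the independent walkers, the single-walker joint-probability asymptotic $f_{\bf b}(i)f_{\bf b}(i')+O\big(i^{-1/2+\varepsilon}+(i'-i)^{-1/2+\varepsilon}\big)$, expansion of the $r$-fold product using $0<f_{\bf b}\leq 1$, and Lemma \ref{lem: bounds for sum over 1<i<i'<n} plus completing the diagonal. The only cosmetic difference is that you quote \eqref{eq: P_2=} specialized to $J=1$, $\mathcal{W}=\{(0,0)\}$, where the paper rederives that estimate directly from Lemmas \ref{lem: properties of bgcd} and \ref{lem: sum with gcdb condition}; the computation is identical.
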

\begin{proof}
To compute the variance of $\overline{R}(n)$, we write
\begin{align}\label{eq-Variance-Rn-express}
\mathbb{V}\big(\overline{R}(n)\big)=\mathbb{E}\big(\overline{R}(n)^2 \big)-\mathbb{E}\big(\overline{R}(n)\big)^2.
\end{align}
It follows from \eqref{eq: E(R(n)_a)=(1)} that
\begin{align}\label{eq-Expect-Rn-square}
\mathbb{E}\big(\overline{R}(n)\big)^2=\frac{1}{n^2}\Big(\sum\limits_{1\leq i\leq n}f_{\bf b}(i)^r\Big)^2+O(n^{-1/2+\varepsilon}).
\end{align}
For $\mathbb{E}\big(\overline{R}(n)^2 \big)$, we expand the square and obtain
\begin{align}\label{eq: expression of E(R(n)^2)}
\mathbb{E}\big(\overline{R}(n)^2 \big)=\frac{2}{n^2}\sum\limits_{1\leq i<j\leq n}\mathbb{E}\big(Y_i Y_j\big)+\frac{1}{n^2}\mathbb{E}\Big(\sum\limits_{i\leq n}Y_i^2\Big).
\end{align}
By the definition of $Y_i$, we have
\begin{align}\label{eq: expression of E(X(r)_iE(X(r)_i'))}
\mathbb{E}\big(Y_i Y_{i^{\prime}}\big)= \mathbb{P}(\text{all}~P_i^{(j)}~\text{and}~P_{i^{\prime}}^{(j)},1\leq j\leq r~\text{are {\bf b}-visible})=:\mathbb{P}_{i,i^{\prime}}^4,
\end{align}
say. By similar argument as in the proof of Theorem \ref{thm: Theorem for multiple watchpoints}, we can see that for $1<i<i^{\prime}$ and some $1\leq j\leq r$, the probability such that $P_i^{(j)}=(k,i-k)$ and $P_{i^{\prime}}^{(j)}=(k+l,i^{\prime}-k-l)$ are $\bf b$-visible is
$$
{i\choose k} \alpha_j^k(1-\alpha_j)^{i-k}{{i^{\prime}-i}\choose l} \alpha_j^{l}(1-\alpha_j)^{i^{\prime}-i-l}
$$
for some $0\leq k\leq i$ and $0\leq l\leq i^{\prime}-i$. Hence, we have
$$
\mathbb{P}_{i,i^{\prime}}^4=\prod\limits_{1\leq j\leq r}\sum\limits_{\substack{0\leq k\leq i\\ \bgcd(k,i-k)=1}}{i\choose k} \alpha_j^k(1-\alpha_j)^{i-k}\sum\limits_{\substack{0\leq l\leq i^{\prime}-i\\ \bgcd(k+l,i^{\prime}-k-l)=1}}{{i^{\prime}-i}\choose l} \alpha_j^{l}(1-\alpha_j)^{i^{\prime}-i-l}
$$
for $1<i<i^{\prime}$. By (ii) of Lemma \ref{lem: properties of bgcd} and applying Lemma \ref{lem: sum with gcdb condition} by taking $J=1$, $n=i^{\prime}$, $m=i^{\prime}-i$, $s_1=0$ and $t_1=i-l-k$, the inner sum over $l$ is equal to
$$
\sum\limits_{\substack{0\leq l\leq i^{\prime}-i\\ \bgcd(i^{\prime},i^{\prime}-i+i-l-k)=1}}{{i^{\prime}-i}\choose l} \alpha_j^{l}(1-\alpha_j)^{i^{\prime}-i-l}=f_{\bf b}(i^{\prime})+O_{\alpha_j,{\bf b},\varepsilon}\big((i^{\prime}-i)^{-1/2+\varepsilon}\big)
$$
for $1<i<i^{\prime}$.
It follows that
\begin{align*}
\mathbb{P}_{i,i^{\prime}}^4=&\prod\limits_{1\leq j\leq r}\sum\limits_{\substack{0\leq k\leq i\\ \bgcd(k,i-k)=1}}{i\choose k} \alpha_j^k(1-\alpha_j)^{i-k}\Big(f_{\bf b}(i^{\prime})+O_{\alpha_j,{\bf b},\varepsilon}\big((i^{\prime}-i)^{-1/2+\varepsilon}\big)\Big)\\
=&\prod\limits_{1\leq j\leq r}\bigg(f_{\bf b}(i^{\prime})\sum\limits_{\substack{0\leq k\leq i\\ \bgcd(k,i-k)=1}}{i\choose k} \alpha_j^k(1-\alpha_j)^{i-k}+O_{\alpha_j,{\bf b},\varepsilon}\big((i^{\prime}-i)^{-1/2+\varepsilon}\big)\bigg)
\end{align*}
for $1<i<i^{\prime}$, where we have used the binomial theorem to bound the contribution of the $O$-term. Apply (ii) of Lemma \ref{lem: properties of bgcd} to change the condition $\bgcd(k,i-k)=1$ to $\bgcd(i,i-k)$ and apply Lemma \ref{lem: sum with gcdb condition} with $J=1$, $n=i$, $m=i$, $s_1=0$ and $t_1=k$, then we obtain
\begin{align}\label{eq: P_4=}
\mathbb{P}_{i,i^{\prime}}^4&=\prod\limits_{1\leq j\leq r}\Big(f_{\bf b}(i)f_{\bf b}(i^{\prime})+O_{\boldsymbol{\alpha}_j,\vb,\varepsilon}\big(i^{-1/2+\varepsilon}+(i^{\prime}-i)^{-1/2+\varepsilon}\big)\Big)
\end{align}
for $1<i<i^{\prime}$ by noting $0<f_{\bf b}(n)<1$ for $n\in\mathbb{N}$. Thus by \eqref{eq: expression of E(X(r)_iE(X(r)_i'))} and \eqref{eq: P_4=}, we have
$$
\sum\limits_{1< i<i^{\prime}\leq n}\mathbb{P}_{i,i^{\prime}}^4=\sum\limits_{1< i<i^{\prime}\leq n}\prod\limits_{1\leq j\leq r}\Big(f_{\bf b}(i)f_{\bf b}(i^{\prime})+O_{\boldsymbol{\alpha}_j,\vb,\varepsilon}\big(i^{-1/2+\varepsilon}+(i^{\prime}-i)^{-1/2+\varepsilon}\big)\Big).
$$
Expanding the product and estimating terms containing the $O$-terms, we derive
\begin{align*}
\sum\limits_{1< i<i^{\prime}\leq n}\mathbb{P}_{i,i^{\prime}}^4=\sum\limits_{1< i<i^{\prime}\leq n}f_{\bf b}(i)^r f_{\bf b}(i^{\prime})^r+O_{\boldsymbol{\alpha}_j,\vb,r,\varepsilon}\bigg(\sum\limits_{1<i<i^{\prime}\leq n}\Big(i^{-1/2+\varepsilon}+(i^{\prime}-i)^{-1/2+\varepsilon}\Big)\bigg).
\end{align*}
Estimating the $O$-term by Lemma \ref{lem: bounds for sum over 1<i<i'<n}, we obtain
$$
\sum\limits_{1< i<i^{\prime}\leq n}\mathbb{P}_{i,i^{\prime}}^4=\sum\limits_{1<i<i^{\prime}\leq n}f_{\bf b}(i)^r f_{\bf b}(i^{\prime})^r+O(n^{3/2+\varepsilon}),
$$
which implies
\begin{align}\label{eq: Expect-Xr-cross-term}
\sum\limits_{1\leq i<i^{\prime}\leq n}\mathbb{E}\big(Y_i Y_{i^{\prime}}\big)=\frac{1}{2}\bigg(\sum\limits_{1\leq i\leq n}f_{\bf b}(i)^r\bigg)^2+O(n^{3/2+\varepsilon}).
\end{align}
by adding diagonal terms. By the definition of $Y_i$, we have
\begin{align}\label{eq: sum_iE(Y_i^2)=}
\sum\limits_{1\leq i\leq n}\mathbb{E}\big(Y_i^2\big)=\sum\limits_{1\leq i\leq n}\mathbb{E}(Y_i)=O(n).
\end{align}
Combining \eqref{eq: Expect-Xr-cross-term} and \eqref{eq: sum_iE(Y_i^2)=} with \eqref{eq: expression of E(R(n)^2)}, we obtain
$$
\mathbb{E}\big(\overline{R}(n)^2 \big)=\frac{1}{n^2}\bigg(\sum\limits_{1\leq i\leq n}f_{\bf b}(i)^r\bigg)^2+O(n^{-1/2+\varepsilon}).
$$
Inserting this and \eqref{eq-Expect-Rn-square} into \eqref{eq-Variance-Rn-express} yields our desired result.
\end{proof}

Now Theorem \ref{thm: Theorem for multiple walkers} follows from Propositions \ref{prop: E(R(n)_a)=}, \ref{prop: V(R(n)_a)=} and Lemma \ref{lem: second moment method}.

\section{Sources of main terms}\label{section-lemma-proofs}
In this section, we use tools from number theory to prove Lemmas \ref{lem: sum with gcdb condition}-\ref{lem: mean of f_b^r(n)}
\subsection{Summation with generalized gcd conditions}

In this subsection we give the
proof of Lemma \ref{lem: sum with gcdb condition}

\begin{proof}[Proof of Lemma \ref{lem: sum with gcdb condition}]	For simplicity, we denote
	$$
	{\sum}_{k}=\sum\limits_{\substack{0\leq k\leq m\\ \bgcd(n-s_j,k-t_j)=1, 1\leq j\leq J}}{m\choose k}
	\alpha^k(1-\alpha)^{m-k}.
	$$
	Using the formula
	$$
	\sum_{d\mid n}\mu(d)=
	\begin{cases}
	1,&\text{if}~n=1,\\
	0,&\text{otherwise},
	\end{cases}
	$$
	we may write
	$$
	{\sum}_{k}=\sum\limits_{0\leq k\leq m}{m\choose k}
	\alpha^k(1-\alpha)^{m-k}\prod\limits_{1\leq j\leq J}\sum_{d_j\mid \bgcd(n-s_j,k-t_j)}\mu(d).
	$$
	By (i) of Lemma \ref{lem: properties of bgcd} and changing the order of summations, we derive
	\begin{align}\label{eq: sum with bgcd conditions}
	{\sum}_{k}=\sum_{\substack{d_j^{b_1}\mid n-s_j, 1\leq j\leq J}}\mu(d_1)\cdots\mu(d_J)\sum\limits_{\substack{0\leq k\leq m\\ k\equiv t_j\bmod d_j^{b_2}, 1\leq j\leq J}}{m\choose k}
	\alpha^k(1-\alpha)^{m-k}.
	\end{align}
	We analyze the conditions of the sums on the right hand side. For any $1\leq j_1\neq j_2\leq J$, letting $d=\gcd(d_{j_1},d_{j_2})$, we then have $d^{b_1}\mid n-s_{j_1}$ and $d^{b_1}\mid n-s_{j_2}$, which gives $d^{b_1}\mid s_{j_1}-s_{j_2}$. Similarly, we have $d^{b_2}\mid t_{j_1}-t_{j_2}$. It follows from (i) of Lemma \ref{lem: properties of bgcd} that $d\mid \bgcd(s_{j_1}-s_{j_2},t_{j_1}-t_{j_2})=1$, which gives $d=1$. This indicates $d_1,\cdots,d_J$ are pairwise coprime to each other. Then by the Chinese Reminder Theorem and Lemma \ref{lem: sum with congruence condition}, we have
	$$
	\sum\limits_{\substack{0\leq k\leq m\\ k\equiv t_j\bmod d_j^{b_2}, 1\leq j\leq J}}{m\choose k}
	\alpha^k(1-\alpha)^{m-k}=\frac{1}{(d_1\cdots d_J)^{b_2}}+O_{\alpha}(m^{-\frac{1}{2}}).
	$$
	Inserting this into \eqref{eq: sum with bgcd conditions} and using the bound $|\mu(d)|\leq 1$ for any $d\in\mathbb{N}$ to estimate the error term, we obtain
	$$
	{\sum}_{k}=f_{{\bf b},{\bf s}}(n)+O_{\alpha}\bigg(m^{-1/2}\prod\limits_{1\leq j\leq J}\sum\limits_{d^{b_1}\mid n-s_j}1\bigg).
	$$
	Then our desired result follows from the bound $\sum\limits_{d^{b_1}\mid n}1\leq\tau(n)=O_{\varepsilon}(n^{\varepsilon})$ for $n\geq 1$.
\end{proof}

\subsection{Source of the main term}In this subsection, we give the proof of Lemma \ref{lem: mean of f_bu}.

\begin{proof}[Proof of Lemma \ref{lem: mean of f_bu}]
	Let $s_0=\max\limits_{1\leq j\leq J}|s_j|$. Since the contribution of those $n\leq s_0$ is $O_{\vb,{\bf s}}(1)$, then we only need to cosider the case $s_0<n\leq x$. By the definition of $f_{{\bf b},{\bf s}}$, we write
	$$
	\sum\limits_{s_0<n\leq x}f_{{\bf b},{\bf s}}(n)=\sum\limits_{s_0<n\leq x}\sum_{\substack{d_j^{b_1}\mid n-s_j,1\leq j\leq J\\ \gcd(d_{j_1},d_{j_2})=1,\forall 1\leq j_1\neq j_2\leq J}}\frac{\mu(d_1)\cdots\mu(d_J)}{(d_1\cdots d_J)^{b_2}}.
	$$
	Changing the order of summations, we have
	\begin{align*}
	\sum\limits_{s_0<n\leq x}f_{{\bf b},{\bf s}}(n)&=\sum_{\substack{d_j\leq (x+|s_j|)^{1/b_1},1\leq j\leq J\\ \gcd(d_{j_1},d_{j_2})=1,\forall 1\leq j_1\neq j_2\leq J}}\frac{\mu(d_1)\cdots\mu(d_J)}{(d_1\cdots d_J)^{b_2}}\sum\limits_{\substack{s_0<n\leq x\\ n\equiv s_j\bmod d_j^{b_1},1\leq j\leq J}}1\\
	&=\sum_{\substack{d_j\leq (x+|s_j|)^{1/b_1},1\leq j\leq J\\ \gcd(d_{j_1},d_{j_2})=1,\forall 1\leq j_1\neq j_2\leq J}}\frac{\mu(d_1)\cdots\mu(d_J)}{(d_1\cdots d_J)^{b_2}}\bigg(\frac{x}{(d_1\cdots d_J)^{b_1}}+O_{{\bf b},{\bf s}}(1)\bigg),
	\end{align*}
	which implies
	$$
	\sum\limits_{s_0<n\leq x}f_{{\bf b},{\bf s}}(n)=x\sum_{\substack{d_j\leq (x+|s_j|)^{1/b_1},1\leq j\leq J\\ \gcd(d_{j_1},d_{j_2})=1,\forall 1\leq j_1\neq j_2\leq J}}\frac{\mu(d_1)\cdots \mu(d_J)}{(d_1\cdots d_J)^{b_1+b_2}}+O_{{\bf b},{\bf s}}\big(\log^{J} x\big).
	$$
	Extending the range of $d_j,1\leq j\leq J$ to all positive integers, the error occurs is $O_{\vb,{\bf s}}( x^{1-b_2/b_1})$, which can be absorbed since $b_1\leq b_2$. Hence we have
	$$
	\sum\limits_{s_0<n\leq x}f_{{\bf b},{\bf s}}(n)=x\sum_{\substack{d_1,\cdots,d_J\in\mathbb{N}\\ \gcd(d_{j_1},d_{j_2})=1,\forall 1\leq j_1\neq j_2\leq J}}\frac{\mu(d_1)\cdots \mu(d_J)}{(d_1\cdots d_J)^{b_1+b_2}}+O_{{\bf b},{\bf s}}\big(\log^{J} x\big).
	$$
	Letting $d=d_1\cdots d_J$, we then obtain
	$$
	\sum\limits_{s_0<n\leq x}f_{{\bf b},{\bf s}}(n)=x\sum_{d=1}^{\infty}\frac{\mu(d)\tau_{J}(n)}{d^{b_1+b_2}}+O_{{\bf b},{\bf s}}\big(\log^{J} x\big).
	$$
	where $\tau_J(n)=\sum\limits_{n=d_1\cdots d_J}1$ is the $J$-fold divisor function. This gives our desired result by noting
	$$
	\sum_{d=1}^{\infty}\frac{\mu(d)\tau_{J}(n)}{d^{b_1+b_2}}=\prod\limits_{p}\bigg(1-\frac{J}{p^{b_1+b_2}}\bigg),
	$$
	where $p$ runs over all primes.
\end{proof}

\subsection{Main term for multiple walkers} In this subsection, we give the proof of Lemma \ref{lem: mean of f_b^r(n)} using analytic methods for Dirichlet series. 

\begin{proof}[Proof of Lemma \ref{lem: mean of f_b^r(n)}]
	By the Euler product formula, the Dirichlet series of $f_{\bf b}(n)^r$ is 
	$$
	\sum_{n=1}^{\infty}\frac{f_{\bf b}(n)^r}{n^s}=\prod_p \bigg(1+\frac{1}{p^s}+\cdots+\frac{1}{p^{(b_1-1)s}}+\frac{(1-p^{-b_2})^r}{p^{b_1s}}+ \frac{(1-p^{-b_2})^r}{p^{(b_1+1)s}}+\cdots \bigg),
	$$
	where $s$ is a complex number with $\Re(s)>1$. Using the Euler product of $\zeta(s)$, we write
	\begin{align}\label{eq-f-Dirichlet-series-product}
	\sum_{n=1}^{\infty}\frac{f_{\bf b}(n)^r}{n^s}=\zeta(s)G(s),
	\end{align}
	where
	$$
	G(s):=G_{{\bf b}, r}(s)=\prod_p \bigg(1-\frac{1}{p^{b_1s}}+\frac{1}{p^{b_1s}}\Big(1-\frac{1}{p^{b_2}}\Big)^r\bigg).
	$$
	Since  $1\leq b_1\leq b_2$, then for any $\varepsilon>0$ and $\Re(s)\geq\varepsilon$, we have
	$$
	\log\bigg(1-\frac{1}{p^{b_1s}}+\frac{1}{p^{b_1s}}\Big(1-\frac{1}{p^{b_2}}\Big)^r\bigg)=\log \bigg(1+O_r\Big(\frac{1}{p^{b_1\Re s+b_2}}\Big)\bigg)=O_r\Big(\frac{1}{p^{b_1\Re s+b_2}}\Big),
	$$
	which gives
	$$
	|G(s)|\leq\exp\bigg(O_r\Big(\sum\limits_{p}\frac{1}{p^{b_1\Re s+b_2}}\Big)\bigg)<\infty.
	$$
	Thus for any $\varepsilon>0$ the product $G(s)$ is absolutely and uniformly convergent in the range $\Re(s)\geq\varepsilon$ and satisfies $G(s)=O_{\varepsilon,{\bf b},r}(1)$. 
	Denote the Dirichlet series of $G(s)$ as $$G(s)=\sum\limits_{n=1}^{\infty}\frac{g_{\vb,r}(n)}{n^s},
	$$
	then we have $g_{{\bf b},r}(n)=O_{\varepsilon,r}(n^{\varepsilon})$ for any $\varepsilon>0$.
	Applying Perron's formula (see e.g. Heath-Brown's notes on Titchmarsh [31], p.70)), we derive  
	$$
	\sum\limits_{n\leq x}g_{\vb,r}(n)=\int_{1+\varepsilon-ix}^{1+\varepsilon+ix}G(s)\frac{x^s}s{\rm d}s+O_{\varepsilon,{\bf b},r}(x^{\varepsilon})
	$$
	for any $\varepsilon>0$. Shifting the integral contour by the residue theorem, we have
	$$
	\sum\limits_{n\leq x}g_{\vb,r}(n)=\bigg(\int_{\varepsilon-ix}^{\varepsilon+ix}+\int_{1+\varepsilon-ix}^{\varepsilon-ix}+\int_{\varepsilon+ix}^{1+\varepsilon+ix}\bigg)G(s)\frac{x^s}s{\rm d}s+O_{\varepsilon,{\bf b},r}(x^{\varepsilon}).
	$$
	Using the upper bound of $G(s)$ to estimate the integrals, we obtain
	\begin{align}\label{eq-sum-g(n)=}
	\sum\limits_{n\leq x}g_{\vb,r}(n)=O_{\varepsilon,{\bf b},r}(x^{\varepsilon}).
	\end{align}
	Now by \eqref{eq-f-Dirichlet-series-product}, we have the relation
	$$
	f_{\bf b}(n)^r=\sum\limits_{kl=n}g_{\vb,r}(k).
	$$
	Thus write
	$$
	\sum\limits_{n\leq x}f_{\bf b}(n)^r=\sum\limits_{k\leq \sqrt{x}}g_{\vb,r}(k)\sum\limits_{l\leq x/k}1+\sum\limits_{l\leq \sqrt{x}}\sum\limits_{k\leq x/l}g_{\vb,r}(k)-\Big(\sum\limits_{k\leq\sqrt{x}}g_{\vb,r}(k)\Big)\Big(\sum\limits_{l\leq \sqrt{x}}1\Big),
	$$
	then with the help of \eqref{eq-sum-g(n)=} we have
	$$
	\sum\limits_{l\leq \sqrt{x}}\sum\limits_{k\leq x/l}g_{\vb,r}(k)=\sum\limits_{l\leq \sqrt{x}}O_{\varepsilon,{\bf b},r}(x^{\varepsilon})=O_{\varepsilon,{\bf b},r}(x^{1/2+\varepsilon}).
	$$
   and
	$$
	\Big(\sum\limits_{k\leq\sqrt{x}}g_{\vb,r}(k)\Big)\Big(\sum\limits_{l\leq \sqrt{x}}1\Big)=O_{\varepsilon,{\bf b},r}(x^{1/2+\varepsilon}).
	$$
	Hence by the estimate $g_{\vb,r}(n)=O(n^{\varepsilon})$, we have
	\begin{align}\label{eq-sum-f(n)=}
	\sum\limits_{n\leq x}f_{\bf b}(n)^r=&\sum\limits_{k\leq \sqrt{x}}g_{\vb,r}(k)\Big(\frac{x}{k}+O(1)\Big)+O_{\varepsilon,{\bf b},r}(x^{1/2+\varepsilon})\\
	=& x\sum\limits_{k\leq \sqrt{x}}\frac{g_{\vb,r}(k)}{k}+O_{\varepsilon,{\bf b},r}(x^{1/2+\varepsilon}).\nonumber
	\end{align}
	Note that
	$$
	\sum\limits_{k\leq \sqrt{x}}\frac{g_{\vb,r}(k)}{k}=\sum\limits_{k=1}^{\infty}\frac{g_{\vb,r}(k)}{k}-\sum\limits_{k>\sqrt{x}}\frac{g_{\vb,r}(k)}{k}=G(1)+O_{\varepsilon,{\bf b},r}(x^{-1/2+\varepsilon}).
	$$
	Inserting this into \eqref{eq-sum-f(n)=} yields
	$$
	\sum\limits_{n\leq x}f_{\bf b}(n)^r=G(1)x+O_{\varepsilon,{\bf b},r}(x^{1/2+\varepsilon}).
	$$
	Now we finish our proof.
\end{proof}

\end{document}